  \newtheorem{theorem}{Theorem}
 \newtheorem{claim}[theorem]{Claim}
 \newtheorem{proposition}[theorem]{Proposition}
\theoremstyle{definition}
 \newtheorem{definition}[theorem]{Definition}
 \newtheorem{remark}[theorem]{Remark}
 \newtheorem{notation}[theorem]{Notation}
\newcommand{\N}{\ensuremath{\mathbb N}} 
\newcommand{\R}{\ensuremath{\mathbb R}} 
\newcommand{\Z}{\ensuremath{\mathbb Z}} 
\renewcommand{\lll}{\lambda}
\title{Upper Minkowski dimension estimates for convex restrictions}
\author{Zolt\'an Buczolich\thanks{
Research supported by National Research, Development and Innovation Office--NKFIH, Grant 104178.
\newline\indent {\it 2000 Mathematics Subject
Classification:} Primary : 26A15; Secondary : 26A12, 26A51, 28A78.
\newline\indent {\it Keywords:} Minkowski dimension, H\"older class, restriction, convexity.},
Department of Analysis, E\"otv\"os Lor\'and\\
University, P\'azm\'any P\'eter S\'et\'any 1/c, 1117 Budapest, Hungary\\
email: buczo@cs.elte.hu\\
{\tt www.cs.elte.hu/\hbox{$\sim$}buczo}
}
\date{\today}
\begin{document}
\maketitle

\medskip


\begin{abstract}
We show that there are functions  $f$ in the Hölder class $C^{\aaa}[0,1]$,
$1< \aaa<2$
  such that 
  $f|_{A}$ is not convex, nor concave for any $A\sse [0,1]$ with $\udimm A>\aaa-1$.
  
Our earlier result shows that for the typical/generic $f\in\cea$,
$0\leq \aaa<2$   there is always
 a set $A\sse [0,1]$  such that $f|_A$ is convex and $\udimm A=1$.

 The analogous statement for monotone restrictions
is the following: there are functions  $f$ in the Hölder class $C^{\aaa}[0,1]$,
$1/2 \leq \aaa<1$
  such that 
  $f|_{A}$ is not monotone on $A\sse [0,1]$ with $\udimm A>\aaa$.
This statement is not true for the range of parameters $\aaa<1/2$
and the main theorem of this paper for the parameter range
$1< \aaa <3/2$ cannot be obtained by integration of the result about monotone
restrictions.
  \end{abstract}


\section{Introduction}

In an earlier paper \cite{[crc]} we discussed results about
convex and monotone restrictions of functions belonging to different Hölder
classes. This paper was related to \cite{[ABMP]}, \cite{[KK1]},
\cite{[KK2]}, \cite{[Ma1]} and several others. In  \cite{[ABMP]} and \cite{[crc]} one can read more about the background of these results.

 We denote by $\dimh A$, $\udimm A$, and $\ldimm A$  the Hausdorff, the lower and upper Minkowski (box) dimension of the set $A$, respectively.
In \cite{[crc]} we showed that for the generic/typical $f\in \cea$, 
when $0\leq \aaa<1$ if $A\sse [0,1]$ and $f|_{A}$ is monotone then
 $\ldimm A\leq \aaa$.
 It is rather easy to see that  for any
 $0<\aaa\leq 1$ if $f\in C^{\aaa}[0,1]$ then  there exists 
$A\sse [0,1]$  such that $f|_{A}$ is monotone and $\dimh A \geq \aaa$.
 According to a result from \cite{[ABMP]} 
 if $0<\aaa<1$
and  $B(t)$  is a fractional Brownian motion of Hurst index
$\aaa$ then almost surely  $B|_{A}$ is not monotone increasing for any $A$ with $\udimm A>\max\{ 1-\aaa,\aaa \}$.
Fractional Brownian motion of Hurst index
$\aaa$ belongs to $C^{\aaa -}[0,1]$.
In \cite{[ABMP]} for a dense set of $\widehat{\aaa}$s in $[1/2,1]$ examples of self similar functions  $f\in C^{\widehat{\aaa}}[0,1]$ were also provided
for which  $f|_{A}$ is not monotone for any $A$ with $\udimm A>\widehat{\aaa} $.
I learned from R. Balka about an unpublished argument of A. Máthé, which  implies that for any
function $f:[0,1]\to \R$ one can always find a set $A$ such that 
$f|_{A}$ is monotone and $\udimm A\geq 1/2 $.
 
With respect to convex restrictions in \cite{[crc]} we proved that
 for typical/generic $f\in\cea$ (in the sense of Baire category),
$0\leq \aaa<2$   there is always
 a set $A\sse [0,1]$  such that $f|_A$ is convex and $\udimm A=1$.
 For generic $f\in C_{1}^{\aaa}[0,1]$, $0<\aaa<2$
we have shown that 
  for any $A\sse [0,1]$
 such that  $f|_{A}$ is convex, or concave we have 
$\ldimm A\leq \max \{ 0, \aaa-1 \}.$
In this paper we prove that for $1< \aaa<2$
 there are functions $f\in C^{\aaa}[0,1]$,
  such that 
  $f|_{A}$ is not convex, nor concave for any $A\sse [0,1]$ with $\udimm A>\aaa-1$.
By multiplying with a suitable constant one can obtain functions
$f\in C^{\aaa}_{1}[0,1]$  in our example.
In \cite{[crc]} we also proved for $1<\aaa\leq 2$ for any  $f\in C^{\aaa}[0,1]$
  there is always a set $A\sse[0,1]$  such that $\aaa-1=\dimh A\leq \ldimm A\leq \udimm A$ and $f|_{A}$ is convex, or concave on $A$.
  This shows that the main result of our current paper is best possible.
  
It is interesting that  for $3/2\leq \aaa<2$ 
by integrating Fractional Brownian motions of Hurst index
$\aaa-1$ one can obtain functions $f \in C^{\aaa-}[0,1]$
with the property that $f|_{A}$ is not convex, nor concave for any $A\sse [0,1]$ with $\udimm A>\aaa-1$.
By using from \cite{[ABMP]}  the earlier mentioned dense set of $\widehat{\aaa}$s in $[1/2,1]$
taking integrals of the 
corresponding self-similar functions one  can obtain a dense set in $3/2\leq \aaa<2$ and functions $f \in C^{\aaa}[0,1]$
with the property that $f|_{A}$ is not convex, nor concave for any $A\sse [0,1]$ with $\udimm A>\aaa-1$.
As it was requested by readers of ealier versions of our paper in Proposition
\ref{*integr*} we provide some details of this procedure of obtaining convexity
results from monotonicity results by integration.
 
 For the parameter range $1\leq \aaa <3/2$ it is not possible to take integrals
 of functions constructed for monotone restrictions. Fortunately,
 our proof works for the whole parameter range $(1,2)$. Based on similar ideas 
 the result concerning the case $\aaa=1$ can also be established,
 though we do not make the lengthy and complicated proof of our paper
 even longer and more complicated by considering this case as well.

Since the proof is quite complicated and contains quite a few technical details here we want to give the main heuristic idea of our argument.
\begin{figure}[h]
  \begin{center}
  \includegraphics[width=12cm]{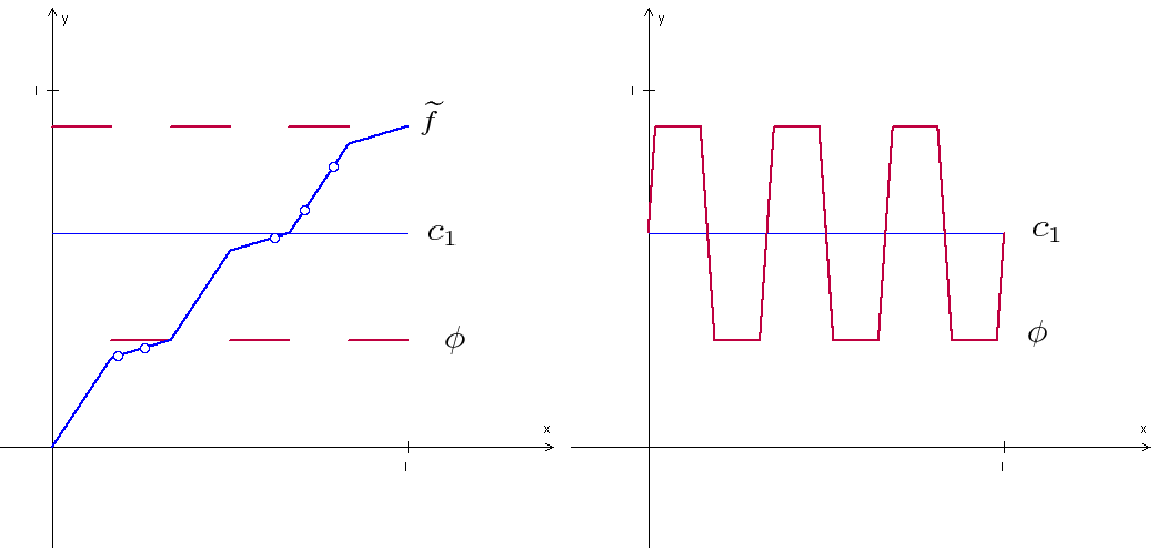}
    \caption{$\fff$, $\tf$ and  $c_{1}$}
     \label{fig1}
  \end{center}
\end{figure}
 Suppose $N$ is a fixed sufficiently large even integer.
 One can consider the function $\fff(x)=c_{1}+c_{2}(-1)^{j+1}$
for $x\in [(j-1)/N,j/N]$, $j=1,...,N$. We take its antiderivative
$\tf(x)=\int_{0}^{x}\fff(t)dt$. These functions are illustrated on the
left half of Figure \ref{fig1}. It is clear that if $\tf$ is convex on a set $A$
then there are at most two $j$'s for which $[(j-1)/N,j/N]$ contains more than 
two points of $A$. Though, it may happen that some other intervals contain
single points of $A$. See  again the left half of Figure \ref{fig1} where
the hollow dots illustrate the correspondig points 
$(a,\tf(a)),$ $a\in A$
on the graph of $\tf$.
We will have to consider in our paper functions  $f$ which
are sufficiently close to $\tf$. These functions will inherit the property that
if $f$ is convex on a set $A$
then there are at most two $j$'s for which $[(j-1)/N,j/N]$ contains more than 
two points of $A$. In the proof of our theorem this will be behind  Claims
\ref{*claimh31*} and \ref{*claimh31b*}.
The function $\fff$ on the left half of Figure \ref{fig1} is not continuous
and we need H\"older $\aaa$ derivative for our function $f$ in Theorem \ref{*5b*}.
Instead of the discontinuous $\fff$ we will use functions $\fff$ which are pictured on
the right half of Figure \ref{fig1} and appear in Subsection \ref{*subsfffn}, especially in \eqref{*h6*b}.  Unfortunately, for these functions
there are intervals, where they are not locally constant. We will call these intervals 
 transitional intervals. We also need to deal with the problem that for a bound
 on the upper box dimension we have to consider all scaling levels
 with grid intervals of the form $[(j-1)/N^{k},j/N^{k}]$, $k\in\N$, $j\in
 \{ 1,...,N^{k} \}$
 and verify that for all sufficiently large $k$ if $f$ is convex on $A$ 
 then we do not have too many such grid intervals which contain points of $A$.
 This means that by subsequent perturbation of the previous functions
 we define the sequence of functions $\fff_{n}$ which will uniformly
 converge to a H\"older $\aaa$ function $g_{1}$. This function $g_{1}$
 is good for our purposes at points where it is ``almost locally constant".
 However due to the existence of transitional intervals we need a second infinite sequence of modifications yielding the sequence $g_{n}$. These functions
 $g_{n}$ will converge to a function $g$ and our function $f$ will be the antiderivative of $g$.

 The author thanks R. Balka and the unknown referee for the careful reading
 of the paper and for comments which
 improved the presentation of the results.

\section{Notation and preliminary results}\label{*secnota*}

For $\aaa\geq 0$ if $f:[0,1]\to\R$ is $\lf \aaa \rf$-times differentiable 
(by definition $f^{(0)}=f$)
we put 
\begin{equation}\label{*defcalla*}
\calla(f)=\sup\Big \{ \frac{|f^{(\lf \aaa \rf)}(x)-f^{(\lf \aaa \rf)}(y)|}{|x-y|^{\{ \aaa \}}}:x\not=y,\  x,y\in [0,1] \Big \}.
\end{equation}
\\
By $C^{0}[0,1]$, or $C[0,1]$ we denote the class of continuous
functions on $[0,1]$.

If $0<\aaa<1$ then $f$ is in $C^{\aaa}[0,1]$, if $\calla(f)<\oo$.

The function $f$ is in $C^{1}[0,1]$ if
$f^{'}$ is continuous on $[0,1]$.
 
If $1<\aaa <2$ then  $f$ is in $ C^{\aaa}[0,1]$ if $f$ is  differentiable, 
and $f^{'}\in C^{\aaa-1}[0,1]$,
that is $\calla(f)<\oo$. 

We denote by $ C^{\aaa-}[0,1]$ 
the set of those functions which are in $C^{\bbb}[0,1]$ for all $\bbb<\aaa$.


For $0\leq \aaa$, $\aaa\not\in \N$,  $f$ is in $C_{1}^{\aaa}[0,1]$ if
$\calla(f)\leq 1$, that is $|f^{(\lf \aaa  \rf)}(x)-f^{(\lf \aaa  \rf)}(y)|\leq |x-y|^{\{\aaa\}}$ for all $x,y\in [0,1]$.



Given an integer $N\geq 2$ and a set $A\sse [0,1]$ we put for $k\in\Z$
$$\can_{N,k}(A)=\#\Big \{ j\in \Z:A\cap \Big [ \frac{j-1}{N^k},\frac{j}{N^k} \Big ]\not=\ess \Big \}.$$

The upper and lower Minkowski (or box) dimension of $A$ is defined as
\begin{equation}\label{*minkdi}
\udimm A=\limsup_{k\to\oo}\frac{\log \can_{N,k}(A)}{k\log N}\text{ and }
\ldimm A=\liminf_{k\to\oo}\frac{\log \can_{N,k}(A)}{k\log N}.
\end{equation}
It is well-known that for any $N$ we obtain the same value.

\begin{notation}
For sets $A\sse [0,1]$ we put 
$${\caf}_{N,k}(A)=\cup\{ [(j-1)/N^{k},j/N^{k}]:\ A\cap [(j-1)/N^{k},j/N^{k}]\not=\ess \}.$$
Clearly, ${\can}_{N,k}(A)=\lll({\caf}_{N,k}(A))\cdot N^{k}.$
(We denote by $\lll$ the one-dimensional Lebesgue measure.)
\end{notation}


The open ball centered at $x$ and of radius $r$ is denoted by $B(x,r)$.
The $\aaa$-dimensional Hausdorff measure is denoted by $\cah^{\aaa}$.

We remind the reader to a few facts about iterated function systems. The details
can be found in many books, for example in \cite{Fa1}. If $S_{i}$, $i=1,...,N$, $N\geq 2$ is a family of similarities
with contraction ratios $c_{i}$ then we talk about an IFS, an iterated function
system. The attractor of the IFS is the compact set $F$ satisfying
$F=\cup_{i}S_{i}(F)$. The similarity dimension of $F$ is the unique $\aaa$
for which $\sum_{i}c_{i}^{\aaa}=1$. If the IFS satisfies the, so called open set condition
then the Hausdorff, the Minkowski and the similarity dimension of $F$ are the same
(see for example Theorem 9.3 of \cite{Fa1} ).
We recall 4.14. Theorem from \cite{[Ma]} by using our notation:
\begin{theorem}\label{*Ma414}
If $S_{i}$, $i=1,...,N$ satisfies the open set condition, then the invariant
set $F$ is self-similar and $0 < \cah^{\aaa}(F) < +\oo$, whence $\aaa = \dimh F$, where 
$\aaa$
is the unique number for which
$\ds \sum_{i}c_{i}^{\aaa}=1.$
Moreover, there are positive and finite numbers $a$ and $b$ such that
\begin{equation}\label{*est414*}
a r^{\aaa} < \cah^{\aaa}(F\cap B(x,r)) <b r^\aaa \text{ for }x \in F, \  0<r<1.
\end{equation}
\end{theorem}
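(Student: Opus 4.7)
The plan is to establish the four assertions in sequence: existence of the self-similar attractor $F$, the two-sided bound $0<\cah^{\aaa}(F)<\oo$, the identification $\aaa=\dimh F$, and the local Ahlfors-regular estimates \eqref{*est414*}. Existence and uniqueness of a nonempty compact $F$ with $F=\cup_{i}S_{i}(F)$ is Hutchinson's theorem, obtained by applying Banach's fixed point theorem to the operator $A\mapsto \cup_{i}S_{i}(A)$ on the space of nonempty compact subsets of $\R$ equipped with the Hausdorff metric. The unique exponent $\aaa\geq 0$ satisfying $\sum_{i}c_{i}^{\aaa}=1$ is supplied by strict monotonicity of $s\mapsto \sum_{i}c_{i}^{s}$ and the intermediate value theorem.

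For the upper bound $\cah^{\aaa}(F)<\oo$ I would iterate self-similarity: for a word $w=(i_{1},\ldots,i_{k})\in \{1,\ldots,N\}^{k}$ write $S_{w}=S_{i_{1}}\circ\cdots\circ S_{i_{k}}$ and $c_{w}=c_{i_{1}}\cdots c_{i_{k}}$, so $F=\cup_{|w|=k}S_{w}(F)$. The family $\{S_{w}(F)\}_{|w|=k}$ then covers $F$ by sets of diameter $c_{w}\cdot \mathrm{diam}(F)$, and
$$\sum_{|w|=k}\bigl(c_{w}\cdot \mathrm{diam}(F)\bigr)^{\aaa}= \mathrm{diam}(F)^{\aaa}\Big(\sum_{i}c_{i}^{\aaa}\Big)^{k}=\mathrm{diam}(F)^{\aaa}.$$
Since $\max_{|w|=k}c_{w}\to 0$, letting $k\to\oo$ yields $\cah^{\aaa}(F)\leq \mathrm{diam}(F)^{\aaa}<\oo$.

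The main obstacle is the lower bound, and this is where the open set condition genuinely enters. I would introduce the natural self-similar probability measure $\mu$ on $F$ as the pushforward under the coding map $\{1,\ldots,N\}^{\N}\to F$ of the Bernoulli product measure with weights $c_{i}^{\aaa}$, giving $\mu(S_{w}(F))=c_{w}^{\aaa}$. Letting $O$ be the open set witnessing the OSC (so the images $S_{i}(O)\sse O$ are pairwise disjoint), the key counting lemma asserts the existence of a constant $M<\oo$ such that for any $x$ and any $0<r<1$ at most $M$ ``first-generation'' cylinders $S_{w}(O)$ with $c_{w}<r\leq c_{w^{-}}$ can intersect $B(x,r)$, where $w^{-}$ denotes $w$ with its last symbol dropped. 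The proof is a volume/packing count inside a fixed enlargement $B(x,Cr)$, exploiting the mutual disjointness of the $S_{w}(O)$ and the fact that each contains a ball of radius $\gtrsim r$. This yields $\mu(B(x,r))\leq C r^{\aaa}$, and the mass distribution principle then forces $\cah^{\aaa}(F)\geq \mu(F)/C=1/C>0$. With $0<\cah^{\aaa}(F)<\oo$ in hand, the identification $\aaa=\dimh F$ is immediate from the definition of Hausdorff dimension.

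The local estimates \eqref{*est414*} follow from the same machinery. For the upper bound, combine $\mu(B(x,r))\leq Cr^{\aaa}$ with the comparability $\mu\asymp \cah^{\aaa}|_{F}$ that drops out of the two-sided bound on $\cah^{\aaa}(F)$. For the lower bound, given $x\in F$ with some address $(i_{1},i_{2},\ldots)$, choose the unique truncation $w=(i_{1},\ldots,i_{k})$ satisfying $c_{w}\cdot \mathrm{diam}(F)\leq r<c_{w^{-}}\cdot \mathrm{diam}(F)$. Then $S_{w}(F)\sse F\cap B(x,r)$, and by self-similarity $\cah^{\aaa}(S_{w}(F))=c_{w}^{\aaa}\cah^{\aaa}(F)\geq ar^{\aaa}$ for a suitable $a>0$ depending only on $\min_{i}c_{i}$ and $\mathrm{diam}(F)$.
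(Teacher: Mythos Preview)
The paper does not prove this theorem at all: it is explicitly recalled as ``4.14.~Theorem from \cite{[Ma]}'' (Mattila's book) and used as a black box, so there is no in-paper argument to compare against. Your proposal is a correct and standard outline of the classical proof---Hutchinson's contraction on the space of compacta, the direct covering for the upper measure bound, the self-similar Bernoulli measure together with the OSC counting lemma and mass distribution principle for the lower bound, and the cylinder-selection argument for the local Ahlfors regularity---and is in line with how the result is established in the cited references \cite{Fa1} and \cite{[Ma]}.
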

In our paper the self similar sets for which this theorem will be applied will satisfy the
 strong separation condition,
that is the sets $S_{i}(F)$ will be disjoint, and hence the open set condition will
also be satisfied. 

We suppose that $F\sse [0,1]$ is the attractor of an IFS satisfying the open set condition.
Estimation \eqref{*est414*} will be useful several ways.

If one considers the function $\fff(x)=\cah^{\aaa}(F\cap [0,x])/\cah^{\aaa}(F)$ then it is easy to deduce from \eqref{*est414*} that $\fff$ is H\"older-$\aaa$.

On the other hand, using the lefthand-side inequality in \eqref{*est414*} it is also
easy to see that for fixed $N $ there exists a constant $C_{F}$ such that
\begin{equation}\label{*est414corr*}
\can_{N,k}(F)<C_{F} N^{\aaa k}\text{ for any }k\in\N.
\end{equation}
Indeed, one needs to observe that if $x\in F\cap [(j-1)/N^{k},j/N^{k}]$
then $\cah^{\aaa}(F\cap [(j-2)/N^{k},(j+1)/N^{k}])\geq
\cah^{\aaa}(F\cap B(x,1/N^{k})>a /N^{\aaa k}$.

Readers of earlier versions of this paper asked for more details about
``integrating" monotonicity results to obtain convexity estimates.
In the proof of the next proposition we provide these details.
\begin{proposition}\label{*integr*}
Suppose $\aaa\in [0,1]$, $g\in C[0,1]$ and $g|_{B}$ is not monotone for any
$B\sse [0,1]$ with $\udimm B>\aaa$. 
Put $f(x)=\int_{0}^{x}g(t)dt.$ Then $f|_{A}$ is not convex, nor concave for any
$A\sse [0,1]$ with $\udimm A>\aaa$.
\end{proposition}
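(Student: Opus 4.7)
The plan is to prove the contrapositive: assume $f|_A$ is convex for some $A\sse[0,1]$ with $\udimm A>\aaa$, and construct a set $B$ with $\udimm B>\aaa$ on which $g$ is non-decreasing, contradicting the hypothesis on $g$. The concave case is entirely symmetric and produces $g|_B$ non-increasing.

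First, since $f$ is continuous, the three-point chord inequality characterizing convexity on $A$ passes to $\overline{A}$ by taking limits of triples, and $\udimm\overline{A}=\udimm A$, so I may assume $A$ is compact. Let $\{(u_n,v_n)\}_n$ enumerate the components of $[0,1]\setminus A$, set $E=\{u_n,v_n\}_n$, and put $A^*=A\setminus E$; the points of $A^*$ are precisely the two-sided accumulation points of $A$ in $A$. For each gap $(u_n,v_n)$, use the Mean Value Theorem to pick $\xi_n\in(u_n,v_n)$ with
\[
g(\xi_n)=\frac{f(v_n)-f(u_n)}{v_n-u_n},
\]
and let $B_0=\{\xi_n\}_n$, $B=A^*\cup B_0$.

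Next I would verify that $g|_B$ is non-decreasing using three-point convexity together with continuity of $g=f'$. For $a<b$ in $A^*$, choose $A$-sequences $a_k\nearrow a$ and $b_k\searrow b$; the chord inequalities on the triples $(a_k,a,b)$ and $(a,b,b_k)$, combined with $\frac{f(a)-f(a_k)}{a-a_k}\to g(a)$ and $\frac{f(b_k)-f(b)}{b_k-b}\to g(b)$ as $k\to\oo$, give $g(a)\leq(f(b)-f(a))/(b-a)\leq g(b)$. For $\xi_m<\xi_n$ in $B_0$, the disjoint associated gaps satisfy $v_m\leq u_n$, and applying the chord inequality to $(u_m,v_m,u_n)$ and $(v_m,u_n,v_n)$ (or to the single triple $(u_m,v_m,v_n)$ if $v_m=u_n$) yields $g(\xi_m)\leq g(\xi_n)$. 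The two mixed cases ($a\in A^*$, $\xi_n\in B_0$ with $a<u_n$ or $a>v_n$) combine the two preceding arguments.

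The main obstacle is the dimension estimate $\udimm B\geq\udimm A$, which I would establish via the counting bound $\can_{N,k}(A)\leq 5\can_{N,k}(B)$ for every $k$. The key observation is: if a grid interval $I$ of length $N^{-k}$ meets $A$ but not $B$, then $I\cap A\sse E$ and no gap can sit entirely in $I$ (otherwise its $\xi_n$ would lie in $I\cap B_0$); thus every gap endpoint in $I$ belongs to a gap extending outside $I$, and since the gaps are pairwise disjoint, $I$ contains at most two such endpoints (one gap exiting to the right, one to the left). Charge each such bad $I$ to the grid interval $J$ containing the corresponding $\xi_n$, which meets $B_0$. A given target $J$ receives charges only from gaps with $\xi_n\in J$ whose endpoints lie outside $J$: at most one gap crosses $J$ from the left, at most one from the right, and at most one contains $J$ entirely (contributing two endpoints outside), so $J$ receives at most four charges. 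Summing over charges yields $\can_{N,k}(A)\leq 5\can_{N,k}(B)$, hence $\udimm B\geq\udimm A>\aaa$, contradicting the hypothesis on $g$.
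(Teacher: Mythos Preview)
Your proof is correct and follows the same overall strategy as the paper: pass to the closure of $A$, build an auxiliary set $B$ on which $g$ is monotone by combining ``derivative points'' of $A$ with Mean-Value-Theorem points from the complementary gaps, and then show $\can_{N,k}(A)\le C\,\can_{N,k}(B)$ for a fixed constant $C$.

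The implementations differ in two places. First, the paper extends $f|_A$ to a convex function $h$ on the interval hull of $A$ and reads off monotonicity of $g|_B$ from the standard fact that the one-sided derivatives of a convex function are nondecreasing; you instead verify monotonicity directly via chord inequalities, which is a bit longer but avoids introducing $h$. Second, your $B$ uses two-sided accumulation points together with one MVT point $\xi_n$ per gap, whereas the paper keeps \emph{all} accumulation points and adds two MVT points $a_{\pm}$ per isolated point; this leads to slightly different bookkeeping in the counting step. Your charging argument is a little loose (the three categories ``crosses from the left / from the right / contains $J$'' overlap, and in fact at most two gaps with $\xi_n\in J$ can have an endpoint outside $J$, since any such gap must contain an endpoint of $J$), so the bound is actually $\can_{N,k}(A)\le 3\,\can_{N,k}(B)$, matching the paper's constant; your stated $5$ is safely larger and the conclusion is unaffected.
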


\begin{proof}
Suppose that $A\sse [0,1]$ is closed and $f|_{A}$ is convex (the concave case is similar
and is left to the reader). Denote by $\cai_A$ the shortest closed interval containing $A$.
One can extend the definition of $f|_{A}$ onto $\cai_A$ to obtain a convex function $h$
defined on $\cai_{A}$ and $f|_{A}=h|_{A}$. At two-sided accumulation points
$a$ of $A$ we have $h'(a)=f'(a)=g(a)$. At one-sided accumulation points $a$ of $A$
we have $h'_{\pm}(a)=f'(a)=g(a)$ where $h'_{\pm}(a)=h'_{+}(a)$, or $h'_{\pm}(a)=h'_{-}(a)$ for right, or left accumulation points of $A$, respectively.

Suppose $a$ is an isolated point of $A$.
If $a$ is not an endpoint of $\cai_A$ then select $c\in A$ and $b\in A$ such that
$b<a<c$ and $(b,c)\cap A=\{ a \}.$ By the Mean Value Theorem
there is $a_{-}\in (b,a)$ and $a_{+}\in (a,c)$  such that $$f'(a_-)=g(a_{-})=\frac{f(a)-f(b)}{a-b}=\frac{h(a)-h(b)}{a-b}, \text{ and }$$
$$f'(a_+)=g(a_{+})=\frac{f(c)-f(a)}{c-a}=\frac{h(c)-h(a)}{c-a}.$$
If $a$ is the left-endpoint of $\cai_A$ then we define only $a_+$, if 
$a$ is the right-endpoint of $\cai_A$ then we define only $a_-$.

Denote by $B$ the set which contains all accumulation points of $A$
and the points $a_{+}$ and $a_{-}$ for isolated points of $A$.
Then the convexity of $h$ on $\cai_A$ and the above
equalities imply that $g$ is montone increasing on $B$ and hence,
say for $N=2$ we have
$$\udimm B=\limsup_{k\to\oo}\frac{\log \can_{2,k}(B)}{k\log 2}\leq \aaa.$$

The $1/2^{k}$ grid intervals taken into consideration in $\can_{2,k}(B)$
cover all accumulation points of $A$ and the points $a_{+}$ and $a_{-}$ 
corresponding to isolated points of $A$. 
Hence $\can_{2,k}(A)\leq  3\can_{2,k}(B)$.
This implies that $\udimm A\leq \aaa$.
\end{proof}

\section{Main result}

 \begin{theorem}\label{*5b*}
 Let $0<\aaa<1$. There exits $f\in C^{1+\aaa}[0,1]$
  such that for any $A\sse [0,1]$ with $\udimm A>\aaa$ the restriction
  $f|_{A}$ is neither convex, nor concave.
 \end{theorem}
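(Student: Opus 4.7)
My plan is to construct $f$ as the antiderivative of a function $g\in C^{\aaa}[0,1]$ built as the uniform limit of a two-layered iterative perturbation scheme, following exactly the heuristic sketched in the introduction. The guiding idea is that $g$ should be ``almost locally constant with alternating values'' at every dyadic-style scale $N^{-k}$, so that its antiderivative $f$ looks like a nested zigzag at every scale; this will force any set on which $f|_{A}$ is convex to be thin at every scale simultaneously, yielding $\udimm A\leq \aaa$.

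I would first establish the base convexity obstruction precisely. Fix a large even integer $N$ and the piecewise-constant model $\fff(x)=c_{1}+c_{2}(-1)^{j+1}$ on $[(j-1)/N,j/N]$, with antiderivative $\tf$ piecewise linear of alternating slopes $c_{1}\pm c_{2}$. The key algebraic fact, which I would isolate as a lemma and state also in the perturbed form (this is what becomes Claims \ref{*claimh31*} and \ref{*claimh31b*}), is: if $\tf|_{A}$ is convex, then at most two of the $N$ level-one intervals contain more than two points of $A$. The proof is elementary: convexity forces the secant slopes between consecutive points of $A$ to be non-decreasing, and a pair of points inside a low-slope interval yields a secant of slope $c_{1}-c_{2}$, which then precludes any later pair of $A$-points from lying in another low-slope interval, with a symmetric bound on the high-slope side. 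The same conclusion survives under a sufficiently small $C^{0}$-perturbation, since the slope jump $2c_{2}$ can be chosen large compared with the perturbation.

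To handle every scale I would iterate: replace each locally constant plateau of the previous $\fff_{n-1}$ by a scaled-down copy of the zigzag with amplitude decreased by the factor $N^{-\aaa}$, producing $\fff_{n}$ that converges uniformly to a function $g_{1}\in C^{\aaa}[0,1]$ (the amplitude decay is exactly what the Hölder-$\aaa$ estimate demands). Choose $N$ large enough that $N^{\aaa}>2$. By induction on the level $k$ of the self-similar structure, every active level-$(k-1)$ subinterval contains at most two level-$k$ sub-subintervals with more than two points of $A$; a tree-counting argument then gives $\can_{N,k}(A)\leq C\cdot 2^{k}\leq C\cdot N^{k\aaa}$, which via \eqref{*minkdi} contradicts $\udimm A>\aaa$. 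The concave case follows by replacing $f$ with $-f$.

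The genuine technical obstacle, and the reason a second infinite iteration $g_{n}\to g$ is needed, is the presence of \emph{transitional intervals}: to keep each $\fff_{n}$ continuous (and ultimately to put $g$ into $C^{\aaa}$) the pointwise jumps of the discontinuous model $\fff$ must be replaced by smooth ramps of small width, and on such a ramp $g_{1}$ is genuinely non-constant, so a point of $A$ sitting there does not fit the ``low-slope pair vs.\ high-slope pair'' dichotomy that drives the two-interval bound. The remedy is to perturb $g_{1}$ again only on the transitional intervals, producing $g_{2}$; this introduces new, much shorter transitional intervals that are then attacked at the next step, and so on. One must control the accumulated $C^{\aaa}$-norm of the successive perturbations—forcing them to be summable in the Hölder norm—while simultaneously verifying that the two-interval bound of the first iteration propagates cleanly through each new transitional layer. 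Making these two requirements compatible is the main technical content of the proof; once the limit $g$ is constructed, setting $f(x)=\int_{0}^{x}g(t)\,dt$ delivers the desired $f\in C^{1+\aaa}[0,1]$.
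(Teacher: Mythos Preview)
Your outline correctly reproduces the paper's two-layer architecture (first $\fff_n\to g_1$, then $g_n\to g$), the convexity obstruction that becomes Claims~\ref{*claimh31*} and~\ref{*claimh31b*}, and the role of the second iteration in absorbing the transitional ramps. There is, however, a genuine gap in the parameter choice you announce, and it is precisely the point where the real work of the paper lies.

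With amplitude $\epsilon_n\sim N^{-n\aaa}$ on plateaus of width $\sim N^{-n}$, the H\"older-$\aaa$ bound on a linear ramp of width $\delta_n$ and height $\epsilon_n$ reads $\epsilon_n/\delta_n^{\aaa}\le C$, which forces $\delta_n\gtrsim N^{-n}$. There are $\sim N^{n}$ such ramps at level $n$, so $\cas(G_n',\aaa)\gtrsim N^{n(1-\aaa)}\to\infty$; consequently $\cas(G'_{T,1},\aaa)=\infty$ and the union of all transitional intervals has full measure in $[0,1]$. The second iteration, which in the paper succeeds because $\cas(G'_{T,1},\aaa)<1/N$ (this is \eqref{*h11*a}, and it drives the geometric decay \eqref{*h13*a} and the final estimate in Claim~\ref{*nnkj*}), therefore has nothing small to iterate on: your correction step would have to rebuild $g$ on essentially the whole interval, and the scheme does not converge.

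The paper's remedy is twofold and not visible in your sketch. First, it does \emph{not} start from a constant $\fff_0$: it takes $\fff_0(x)=\cah^{\aaa}(F_0\cap[0,x])/\cah^{\aaa}(F_0)$ for a self-similar set $F_0$ of dimension $\aaa$, so that the base function already carries the full H\"older-$\aaa$ regularity. Second, the zigzag amplitude on a gap $(a,b)$ is taken to be $(b-a)^{2\aaa^{-1}}/N^{2\aaa^{-1}}$ (roughly $N^{-2n/\aaa}$ at level $n$), which is \emph{far} smaller than your $N^{-n\aaa}$; this choice makes the transition width equal to the amplitude, giving slope $\pm 1$ on every ramp (see \eqref{*h7*aa}) and yielding the crucial smallness $\cas(G'_{T,1},\aaa)<1/N$. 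The self-similar base $\fff_0$ is what allows the amplitudes to be this small while keeping $g_1$ genuinely H\"older-$\aaa$, and the gap-size distribution \eqref{*h16*a} of $F_0$ is what makes the final summation after \eqref{*h36*aa} converge. Without these two ingredients your plan, as stated, cannot be completed.
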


\begin{remark}
Multiplying $f$ by a suitable constant one can achieve $f\in C_{1}^{1+\aaa}
[0,1]$ as well.
As it was mentioned in the introduction the theorem can be proved for $\aaa=0$
as well, with a suitably modified other, rather technical proof.
\end{remark}
 
 \begin{proof}
 We will define $g=f'\in C^{\aaa}[0,1]$.
 
 A large even integer $N$ will be fixed later.
 
 Before giving the details of the proof we give a list of some notation
 introduced at different steps. This might be helpful for later reference.
 
 In Subsection \ref{*subsfofffo} we introduce the self-similar set $F_{0}$
 and the H\"older $\aaa$ function $\fff_{0}$, which is constant on the
 connected components of the open set $G_{0}=[0,1]\sm F_{0}$. 
 The collection of these connected components of $G_{0}$ are denoted by $\cai_{0}$.
 
 In Subsection \ref{*subsfffn}
 the functions $\fff_{n}$ are defined. These functions
  are constant on the
 connected components ($I(a,b,j)$ in \eqref{*h6*d}) of the open sets $G_{n}=[0,1]\sm F_{n}$ defined in \eqref{*gndef*}.
 The connected components of $G_{n}$ are denoted by $\cai_{n}$.
 These sets are nested, $G_{n+1}\sse G_{n}$.
While $\fff_{n}$, defined in \eqref{*h6*b} is constant on the intervals $I(a,b,j)$ there will be some  transitional intervals, where $\fff_{n}$ is non-constant and linear.
These transitional intervals are the connected components of  $G_{n}'$,
defined in \eqref{*h7*a}.
The collection of these connected components of $G_{n}'$ are denoted by $\cai_{n}'$. The sets $G_{n}'$ are disjoint.
 
 In Subsection \ref{*ss33} the function $g_{1}$ is defined. Its transitional intervals, denoted by $\cai_{T,1}'$ are the connected components of
 $G'_{T,1}=\cup_{n=1}^{\oo}G_{n}'.$ 
 
 In Subsection \ref{*ss34} the functions $g_{n}$ are defined for $n\geq 2$.
 The corresponding transitional intervals are denoted by $\cai_{T,n}'$. They  are the connected components of
 $G'_{T,n}.$ 
 The sets $G'_{T,n}$ are also nested, they satisfy $G'_{T,n+1}\sse G'_{T,n}.$
 The function $g$ will be the limit of the functions $g_{n}$.
 
 In Subsection \ref{*ss35} the H\"older property of $g$ and of its antiderivative $f$ is verified.
 
 In Subsection \ref{*ss36} we define and estimate $\cas(G,\aaa)$ to measure the size of the sets $G_{n}'$. 
 
 In Subsection \ref{*ss37} we give the upper estimate of the upper box dimension of the sets $A$ on which $f$ can be convex, or concave.
Here the most important definition is $G''_{T,j,k}\sse G'_{T,j}$ which contains all transitional intervals of $g_{j}$ which are of length longer than $N^{-k}$.
The collection of the connected components of
$G''_{T,j,k}$ is denoted by $\cai''_{T,j,k}$.
In Claim \ref{*nnk*} we estimate ${\can}_{N,k}(A\sm {G}_{T,1,k}'')$
and in Claim \ref{*nnkj*} we reduce the general case to this initial one.

 \subsection{The definition of the self similar set $F_{0}$ and of the H\"older $\aaa$ function $\fff_{0}$}\label{*subsfofffo}
 \begin{figure}[h]
  \begin{center}
  \includegraphics[width=14cm]{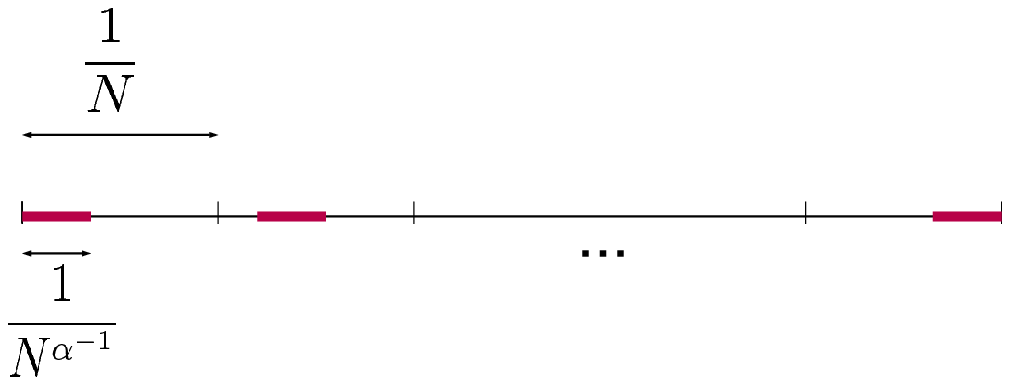}
    \caption{$F_{0,1}$}
     \label{fig2}
  \end{center}
\end{figure}
 First we select a standard H\"older $\aaa$ function $\fff_{0}$ the following way.
 Let $F_{0,1}$ consist of the following subintervals of
$[0,1]$ 
 \begin{equation}\label{*subi}
 \Big [ (j-1)\cdot \left (\frac{1}{N}+\frac{1}{N-1}\left (\frac{1}{N}-\frac{1}{N^{\aaa^{-1}}} \right ) \right ), \hskip2cm
 \end{equation}
 $$
 \hskip2cm
 (j-1)\cdot \left (\frac{1}{N}+\frac{1}{N-1}\left (\frac{1}{N}-\frac{1}{N^{\aaa^{-1}}} \right ) \right )
+\frac{1}{N^{\aaa^{-1}}} \Big ],\ j=1,...,N .$$
We remark that this complicated looking definition implies that
 $0$ and $1$ both belong to a component interval of  
 $F_{0,1}$. These component intervals are of equal
 length $1/N^{\aaa^{-1}}$
 and are equally spaced in $[0,1].$
We denote by $F_{0}$ the self-similar set one can obtain by repeating the steps
used for $F_{0,1}$ in each subinterval infinitely often. That is, we take the attractor 
of the IFS mapping linearly $[0,1]$ onto the components of $F_{0,1}$.

We can apply Theorem \ref{*Ma414} and the subsequent remarks
to $F_{0}$.

The IFS defining $F_{0}$ consists of $N$ similarities each of ratio ${1}/{N^{\aaa^{-1}}}$ and hence $N\cdot ({1}/{N^{\aaa^{-1}}})^{\aaa}=1$.
The similarity dimension and the other dimensions 
of $F_{0}$ coincide and hence we have $\dimm F_{0}=\udimm F_{0}=\aaa$ 
and $0<\cah^{\aaa}(F_{0})<\oo$ and
$$\fff_{0}(x)=\cah^{\aaa}(F_{0} \cap [0,x])/\cah^{\aaa}(F_{0})$$
satisfies with a suitable constant $C_{\fff_{0}}$
\begin{equation}\label{*h4*a}
|\fff_{0}(x)-\fff_{0}(y)|\leq C_{\fff_{0}}|x-y|^{\aaa} \text{ for all }x,y\in [0,1],
\end{equation} 
that is, $\fff_{0}$ is a H\"older-$\aaa$ function and it is constant on the
intervals contiguous to $F_{0}$. We also have $0,1\in F_{0}$.
Later in \eqref{*h19*a} we will also make the additional assumption that $C_{\fff_{0}}>2$.

We put ${G}_{0}=[0,1] \sm F_{0}$ and denote by ${\cal I}_{0}$ the system of its component
intervals, that is,
$${\cal I}_{0}=\{ (a,b):(a,b)\text{ is a connected component of }{G}_{0} \}.$$
This is the system of intervals contiguous to $F_{0}$.

Since $\dimh F_{0}=\udimm F_{0}=\aaa$  by 
the remarks after Theorem \ref{*Ma414}
there exists a constant $C_{F_{0}}$  such that 
\begin{equation}\label{*h5*b}
{\can}_{N,k}(F_{0})\leq C_{F_{0}} N^{\aaa k}\text{ for any }k\in\N,
\end{equation}
 and, obviously
 \begin{equation}\label{*h5*c}
 {\caf}_{N,k}(F_{0})\text{ contains all }(a,b)\in {\cal I}_{0}\text{ for which }b-a\leq\frac{1}{N^{k}}.
 \end{equation}
 It is also clear that  for sufficiently large $N$
 \begin{equation}\label{*h5*a}
b-a<\frac{1}{N}+\frac{1}{N(N-1)}=\frac{1}{N-1}
<\frac{2^{\aaa/2}}{N}<\frac{2}{N}
\text{ for any $(a,b)\in {\cal I}_{0}$.}
 \end{equation}
  
 The system of the, so called, transitional intervals ${\cal I}_{0}'=\ess$ and
 $G_{0}'=\ess$,  by definition at this initial step of our construction.

 \subsection{  Definition of the functions $\fff_{n}$ for $n\geq 1$}\label{*subsfffn} 
 We want to define a sequence of
 continuous
functions $\fff_{n}$ by induction. Suppose that we have already defined the function
$\fff_{n-1}$, the open set ${G}_{n-1}$ and $F_{n-1}=[0,1]\sm {G}_{n-1}$.
The system of component intervals of ${G}_{n-1}$ is denoted by ${\cal I}_{n-1}$
and for any $(a,b)\in {\cal I}_{n-1}$ we have 
\begin{equation}\label{*h6*a}
b-a<\frac{2^{\aaa/2}}{N^{n}}<\frac{2}{N^{n}} \text{ and } \fff_{n-1}\text{ is constant on }[a,b].
\end{equation}
\begin{figure}[h]
  \begin{center}
  \includegraphics[width=12cm]{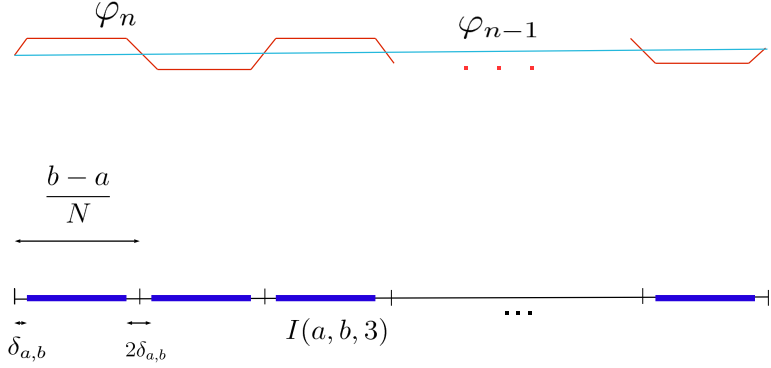}
    \caption{The intervals $I(a,b,j)$ and the function $\fff_n$}
     \label{fig3}
  \end{center}
\end{figure}
 We also have the system of ``transitional intervals", ${\cal I}_{n-1}'$.
 \begin{equation}\label{*h6*c}
 \text{ We put $\fff_{n}(x)=\fff_{n-1}(x)$ for $x\in F_{n-1}$.}
 \end{equation}
 Suppose $(a,b)\in {\cal I}_{n-1}$.
 Let
 \begin{equation}\label{*h6*d}
 I(a,b,j)=\Big  [ a+\frac{j-1}{N}(b-a)+\frac{(b-a)^{2\aaa^{-1}}}{N^{2\aaa^{-1}}},
  a+\frac{j}{N}(b-a)-\frac{(b-a)^{2\aaa^{-1}}}{N^{2\aaa^{-1}}}\Big ]
 \end{equation}
 $j=1,...,N$.
 If we let $\ddd_{a,b}=\frac{(b-a)^{2\aaa^{-1}}}{N^{2\aaa^{-1}}}$
 then
 the intervals $I(a,b,j)$ are equally placed within $(a,b)$ with gaps 
$2\ddd_{a,b}$ separating two consecutive such intervals, and
there is a gap of length $\ddd_{a,b}$ before the first and after the last such interval,
see the bottom half of Figure \ref{fig3}.
These gaps will be called later transitional intervals, due to the fact
that the functions $\fff_{n}$ will take constant values on the intervals
$I(a,b,j)$ and will change its values linearly on the transitional
intervals. 
 If $(a,b)\in{\cal I}_{n-1}$ and $x\in I(a,b,j)$ for a $j\in \{ 1,...,N \}$
 then put
 \begin{equation}\label{*h6*b}
 \fff_{n}(x)=\fff_{n-1}(a)+(-1)^{j+1}\frac{(b-a)^{2\aaa^{-1}}}{N^{2\aaa^{-1}}}.
 \end{equation}
 Set 
 \begin{equation}\label{*h7*a}
 {G}_{n}'=\bigcup_{(a,b)\in{\cal I}_{n-1}} ((a,b)\sm\cup_{j=1}^{N}I(a,b,j)).
 \end{equation}
 Observe that we have not defined yet the function $\fff_{n}$ on ${G}_{n}'$.
 The system of component intervals of ${G}_{n}'$ will be denoted by ${\cal I}_{n}'$
 and we call them transitional intervals.
 It will be useful later that our construction implies that
 if $(a,b)\in{\cal I}_{n}'$ is a transitional interval then
 \begin{equation}\label{*h7*aa}
 \Big | \frac{\fff_{n}(b)-\fff_{n}(a)}{b-a}\Big |=1.
 \end{equation}
 We define $\fff_{n}$ on $(a,b)\in{\cal I}_{n}'$ so that it is linear and connects 
 $(a,\fff_{n}(a))$ and $(b,\fff_{n}(b))$.
 
 Set 
 \begin{equation}\label{*gndef*}
 {G}_{n}=\bigcup_{(a,b)\in{\cal I}_{n-1}}\bigcup_{j=1}^{N}\inte(I(a,b,j))
 \end{equation}
 and
 ${\cal I}_{n}$ will denote the set of component intervals of ${G}_{n}$.
 We put $F_{n}=[0,1]\sm G_{n}$.
 Clearly, by \eqref{*h6*a} and \eqref{*h6*d}
 \begin{equation}\label{*h7*c}
 b-a<\frac{2^{\aaa/2}}{N^{n+1}}\text{ for all }(a,b)\in{\cal I}_{n}.
 \end{equation}

 \begin{claim}\label{*holdfi*}
 We have
 \begin{equation}\label{*h18*a}
 |\fff_{n}(x)-\fff_{n}(y)|\leq C_{\fff_{0}}(1+...+\frac{1}{2^n})|x-y|^{\aaa},
 \text{ for all }x,y\in[0,1].
 \end{equation}
 \end{claim}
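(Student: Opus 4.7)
The plan is to establish \eqref{*h18*a} by induction on $n$, with \eqref{*h4*a} serving as the base case $n=0$. Setting $h_{n}:=\fff_{n}-\fff_{n-1}$, the inductive step will reduce to showing
\[
|h_{n}(x)-h_{n}(y)|\leq C_{\fff_{0}}\cdot 2^{-n}|x-y|^{\aaa},
\]
since adding this bound to the inductive hypothesis for $\fff_{n-1}$ produces precisely the new term $C_{\fff_{0}}/2^{n}$ required in the geometric sum.

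First I will analyze $h_{n}$ on a single component. By \eqref{*h6*c}, $h_{n}$ vanishes on $F_{n-1}$; and for any $(a,b)\in\cai_{n-1}$, the function $\fff_{n-1}$ is constant on $[a,b]$ by \eqref{*h6*a}, so $h_{n}(x)=\fff_{n}(x)-\fff_{n-1}(a)$ there. Combining \eqref{*h6*b}, \eqref{*h7*aa}, and the linear interpolation on transitional intervals, $h_{n}$ restricted to $[a,b]$ is piecewise linear with slopes in $\{-1,0,+1\}$, vanishes at both endpoints $a$ and $b$, and satisfies $\|h_{n}\|_{\infty,[a,b]}\leq\ddd_{a,b}$.

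The key step is to promote this to a global $1$-Lipschitz estimate on $[0,1]$. For $x<y$, let $x'$ be the right endpoint of the component of $G_{n-1}$ containing $x$ (take $x'=x$ if $x\in F_{n-1}$), and let $y'$ be the left endpoint of the component containing $y$ (take $y'=y$ if $y\in F_{n-1}$). Then $h_{n}(x')=h_{n}(y')=0$, and any component of $G_{n-1}$ strictly between $x'$ and $y'$ is bounded by points of $F_{n-1}$ on which $h_{n}=0$, hence contributes no net change; the triangle inequality yields
\[
|h_{n}(x)-h_{n}(y)|\leq|h_{n}(x)-h_{n}(x')|+|h_{n}(y')-h_{n}(y)|\leq(x'-x)+(y-y')\leq y-x.
\]
Combined with the local sup bound, this gives $\|h_{n}\|_{\infty}\leq\ddd_{\max}:=\max_{(a,b)\in\cai_{n-1}}\ddd_{a,b}$, and using $b-a<2^{\aaa/2}/N^{n}$ from \eqref{*h6*a} together with the explicit definition of $\ddd_{a,b}$, a direct calculation produces $\ddd_{\max}<2N^{-2(n+1)/\aaa}$.

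To finish, I apply the elementary inequality $\min(u,v)\leq u^{\aaa}v^{1-\aaa}$ to interpolate between the two bounds:
\[
|h_{n}(x)-h_{n}(y)|\leq\min\bigl(|x-y|,\,2\ddd_{\max}\bigr)\leq(2\ddd_{\max})^{1-\aaa}|x-y|^{\aaa}\leq 4^{1-\aaa}N^{-2(n+1)(1-\aaa)/\aaa}|x-y|^{\aaa}.
\]
Fixing $N$ once and for all large enough that $N^{2(1-\aaa)/\aaa}\geq 2$ (so the prefactor contracts at least geometrically in $n$) and $4^{1-\aaa}N^{-4(1-\aaa)/\aaa}\leq C_{\fff_{0}}/2$ (so the case $n=1$ is covered) forces the right-hand side to be at most $C_{\fff_{0}}2^{-n}|x-y|^{\aaa}$ for every $n\geq 1$, which closes the induction. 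The mildly subtle part of the argument is the global Lipschitz bound, where the essential observation is that each bump of $h_{n}$ vanishes at both endpoints of its component of $G_{n-1}$, so the oscillations cannot accumulate when one crosses $F_{n-1}$.
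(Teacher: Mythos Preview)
Your proof is correct and follows the same inductive scheme as the paper, but the inductive step is organized differently. The paper fixes $x<y$, chooses points $b_{x},a_{y}\in F_{n-1}$ between them, and bounds $|\fff_{n}(x)-\fff_{n}(y)|$ directly by a triangle inequality
\[
|\fff_{n}(x)-\fff_{n}(b_{x})|+|\fff_{n-1}(b_{x})-\fff_{n-1}(a_{y})|+|\fff_{n}(a_{y})-\fff_{n}(y)|,
\]
using $|\fff_{n}(x)-\fff_{n}(b_{x})|\leq |x-b_{x}|\leq (2/N^{n})^{1-\aaa}|x-b_{x}|^{\aaa}$ together with the normalizations $N^{1-\aaa}>4$ and $C_{\fff_{0}}>2$ from \eqref{*h19*a}. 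You instead isolate the increment $h_{n}=\fff_{n}-\fff_{n-1}$, prove it is globally $1$-Lipschitz with $\|h_{n}\|_{\infty}\leq\ddd_{\max}$, and interpolate via $\min(u,v)\leq u^{\aaa}v^{1-\aaa}$. Your route makes the ``Lipschitz plus small'' structure more explicit and avoids the case split over which of $x,y$ lies in $F_{n-1}$; the paper's route is slightly more hands-on but yields the same geometric factor $2^{-n}$.

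One small presentational point: your definition of $x',y'$ tacitly assumes $x$ and $y$ lie in \emph{different} components of $G_{n-1}$ (otherwise $x'>y'$ and the inequality $(x'-x)+(y-y')\leq y-x$ fails). The same-component case is of course immediate from your observation that $h_{n}$ has slopes in $\{-1,0,1\}$ on each $[a,b]$, so you should just state that case separately before introducing $x',y'$.
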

 
 \begin{proof}[Proof of Claim \ref{*holdfi*}]
 By \eqref{*h4*a} this holds for $\fff_{0}$. By induction, we can suppose that
 \eqref{*h18*a} holds for $n-1$ instead of $n$.
 Without limiting generality suppose that $x<y$.
 If $x,y\in F_{n-1}$ then $\fff_{n-1}(x)=\fff_{n}(x)$
 and $\fff_{n-1}(y)=\fff_{n}(y)$ and our induction hypothesis implies \eqref{*h18*a}.
 Suppose $x\in(a_{x},b_{x})\in {\cal I}_{n-1}$ 
 and $y\in(a_{y},b_{y})\in {\cal I}_{n-1}$.
 (The other two cases, $x\in F_{n-1}$ ,  $y\in(a_{y},b_{y})\in {\cal I}_{n-1}$
 and  $x\in(a_{x},b_{x})\in {\cal I}_{n-1}$ 
 and $y\in F_{n-1}$ can be treated analogously and we omit the details.)
 
 The definition of $\fff_{n}$ implies that
 \begin{equation}\label{*h18*b}
 |\fff_{n}(b_{x})-\fff_{n}(x)|\leq |b_{x}-x|\text{ and }
 |\fff_{n}(y)-\fff_{n}(a_{y})|\leq |y-a_{y}|.
 \end{equation}
 We can suppose that $N$ is so large that
 \begin{equation}\label{*h19*a}
 N^{1-\aaa}>4 \text{ and }C_{\fff_{0}}>2.
 \end{equation}
 By \eqref{*h6*a}, $|b_{x}-a_{x}|<\frac{2}{N^{n}}$
 and $|b_{y}-a_{y}|<\frac{2}{N^{n}}$.
 This and \eqref{*h19*a} imply
 \begin{equation}\label{*h19*b}
 |b_{x}-x|^{1-\aaa}<\frac{2^{1-\aaa}}{N^{(1-\aaa)n}}<\frac{C_{\fff_{0}}}{2^{n+1}}\text{ and }
 |y-a_y|^{1-\aaa}<\frac{2^{1-\aaa}}{N^{(1-\aaa)n}}<\frac{C_{\fff_{0}}}{2^{n+1}}.
 \end{equation}
 Using that $0<\aaa<1$ and $b_{x},a_{y}\in F_{n-1}$ we infer
 $$|\fff_{n}(x)-\fff_{n}(y)|\leq
 |\fff_{n}(b_{x})-\fff_{n}(x)|+|\fff_{n-1}(b_{x})-\fff_{n-1}(a_{y})|
 +|\fff_{n}(y)-\fff_{n}(a_{y})|\leq$$
 $$\frac{C_{\fff_{0}}}{2^{n+1}}|b_{x}-x|^{\aaa}+
 C_{\fff_{0}}(1+...+\frac{1}{2^{n-1}})
 |b_{x}-a_{y}|^{\aaa}+ \frac{C_{\fff_{0}}}{2^{n+1}}
 |y-a_{y}|^{\aaa}<$$ $$
 C_{\fff_{0}}(1+...+\frac{1}{2^{n}})|x-y|^{\aaa}.$$
  \end{proof}
 
 If $x\in(a,b)\in{\cal I}_{0}$
 then
 by \eqref{*h6*b}
 \begin{equation}\label{*h24*aa}
 |\fff_1(x)-\fff_0(x)|\leq \frac{(b-a)^{2\aaa^{-1}}}{N^{2\aaa^{-1}}}
 <\frac{(b-a)^{2}}{N^{2}}
 \end{equation}
 and in general, using that for $(a',b')\in {\cal I}_{n-1},$
 $(a',b')\sse (a,b)\in {\cal I}_{0}$ we have $b'-a'<(b-a)/N^{n-1}$ we infer
 \begin{equation}\label{*h24*bb}
 |\fff_n(x)-\fff_{n-1}(x)|\leq \frac{(b-a)^{2\aaa^{-1}}}{N^{(n-1)2\aaa^{-1}}}\cdot \frac{1}{N^{2\aaa^{-1}}}<\frac{(b-a)^{2}}{N^{2(n-1)}}\cdot \frac{1}{N^{2}}
 \end{equation}
 for $n=2,3,...$.

 \subsection{  Definition of the function $g_{1}$}\label{*ss33} 

 From \eqref{*h24*bb} it follows that $\fff_{n}(x)$ converges uniformly
to a continuous function $g_{1}(x)$.  

We infer from \eqref{*h18*a} that
\begin{equation}\label{*h8*d}
|g_{1}(x)-g_{1}(y)|\leq 2C_{\fff_{0}}|x-y|^{\aaa}\text{ for all }x,y\in[0,1],
\end{equation}
 moreover  by \eqref{*h24*bb} for $x\in(a,b)\in {\cal I}_{0}$
 we have 
 \begin{equation}\label{*h25*aa}
 |g_{1}(x)-\fff_{1}(x)|\leq \frac{(b-a)^{2}}{N^{2}}\Big (\frac{1}{N^{2}}+\frac{1}{N^{4}}+... \Big )<\frac{(b-a)^{2}}{N^{2}}\cdot \frac{2}{N^{2}}.
 \end{equation}
 From \eqref{*h6*b}, \eqref{*h24*aa} and \eqref{*h24*bb} we also obtain
 for $x\in (a,b)\in {\cal I}_{0}$
 \begin{equation}\label{*h25*gfo*}
 |g_{1}(x)-\fff_{0}(x)|<\frac{(b-a)^{2\aaa^{-1}}}{N^{2\aaa^{-1}}}\Big (1+\frac{1}{N^{2}}+... \Big )<\frac{(b-a)^{2\aaa^{-1}}}{N^{2\aaa^{-1}}}2^{\aaa^{-1}}
 \end{equation}
 if $N$ is sufficiently large.

\begin{claim}\label{*clgo}
We have
\begin{equation}\label{*h8*b}
0< g_{1}(x)< 1 \text{ for any }x\in(0,1), \  g_{1}(0)=0\text{ and }
g_{1}(1)=1.
\end{equation}
\end{claim}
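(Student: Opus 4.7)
The plan is to split the claim into boundary values and strict interior bounds, and to treat the interior in the two cases $x \in F_0$ and $x \in G_0$ separately.

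Since $I(a,b,j) \sse (a,b)$ in \eqref{*h6*d}, the sets $G_n$ form a decreasing sequence, so $F_0 \sse F_n$ for every $n$. Iterating \eqref{*h6*c} then yields $\fff_n \equiv \fff_0$ on $F_0$ for all $n$. Because $0,1 \in F_0$ by the construction following \eqref{*subi}, this gives $\fff_n(0) = 0$ and $\fff_n(1) = 1$ for every $n$, and the uniform convergence $\fff_n \to g_1$ transfers these values to $g_1$.

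For $x \in F_0 \cap (0,1)$ the same iteration gives $g_1(x) = \fff_0(x)$, so it remains to check $\fff_0(x) \in (0,1)$. Using the IFS symbolic coding of $F_0$: since $x \neq 0$ its address $(i_1, i_2, \dots)$ has some $i_k > 1$, and then the cylinder $S_{i_1}\cdots S_{i_{k-1}}(S_1(F_0)) \sse F_0 \cap [0,x)$ has positive $\cah^{\aaa}$-measure by Theorem \ref{*Ma414} (the open set condition is in force by the strong separation condition recalled after that theorem), hence $\fff_0(x) > 0$. The symmetric argument using some $i_k < N$ gives $\fff_0(x) < 1$.

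For $x \in (a,b) \in \cai_0$ we have $\fff_0(x) = \fff_0(a) \in (0,1)$ by the previous step, and \eqref{*h25*gfo*} yields $|g_1(x) - \fff_0(a)| < 2^{\aaa^{-1}} (b-a)^{2\aaa^{-1}} N^{-2\aaa^{-1}}$. The remaining task is to show this perturbation is strictly smaller than both $\fff_0(a)$ and $1 - \fff_0(a)$. Using the IFS hierarchy, a gap $(a,b)$ occurring at level $k$ satisfies $b - a \lesssim N^{-(k-1)\aaa^{-1}}$, while $\fff_0(a)$ and $1 - \fff_0(a)$ each dominate the $\cah^{\aaa}$-mass of at least one full level-$k$ sibling cylinder and so are $\gtrsim N^{-k}$. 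The exponent inequality $2\aaa^{-1} > 1$, which holds strictly because $\aaa < 1$, makes the perturbation bound $\lesssim N^{-2(k-1)\aaa^{-2} - 2\aaa^{-1}}$ strictly smaller than $N^{-k}$ for every $k \geq 1$, once $N$ is chosen large enough to absorb universal constants. The main obstacle is precisely this uniform comparison, especially for gaps buried deep in the leftmost or rightmost branch of the IFS where both $\fff_0(a)$ and $b-a$ are simultaneously small; the exponent gap $2\aaa^{-1} > 1$, which uses $\aaa < 1$ crucially, is what closes the bookkeeping.
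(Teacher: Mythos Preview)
Your proof is correct and follows essentially the same route as the paper: boundary values via $0,1\in F_{0}$, the case $x\in F_{0}$ via $g_{1}=\fff_{0}$, and the case $x\in(a,b)\in\cai_{0}$ via \eqref{*h25*gfo*} combined with a lower bound on $\fff_{0}(a)$ and $1-\fff_{0}(a)$. The only cosmetic difference is that the paper organizes the gaps by the positional scale $(a,b)\sse[N^{-\aaa^{-1}(k+1)},N^{-\aaa^{-1}k}]$ and computes the explicit comparison $|g_{1}(x)-\fff_{0}(x)|<\tfrac{1}{10}N^{-(k+1)}\leq\tfrac{1}{10}\fff_{0}(x)$, whereas you organize them by IFS level and extract the same conclusion from the exponent gap $2\aaa^{-1}>1$.
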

 
 \begin{proof}[Proof of Claim \ref{*clgo}]
 Since $0,1\in F_{0}$ we have
 $g_{1}(0)=\fff_{0}(0)=0$ and
 $g_{1}(1)=\fff_{0}(1)=1$.
 
 The self-similarity of $F_{0}$ and  \eqref{*h5*a} imply that if $(a,b)\in{\cal I}_{0}$ satisfies

$$\text{ $(a,b)\sse
 [N^{-\aaa^{-1}(k+1)},N^{-\aaa^{-1} k}]$, $k\in \N$ then}$$ 
 \begin{equation}\label{*Nak*}
 b-a<\frac{2^{\aaa/2}}{N\cdot N^{\aaa^{-1} k}}.
 \end{equation}
Using \eqref{*h25*gfo*} for $x\in(a,b)\in {\cal I}_{0}$
 with sufficiently large $N$ we obtain
 \begin{equation}\label{*h26*gefo*}
 |g_{1}(x)-\fff_{0}(x)|<\frac{2N^{-2\aaa^{-1}} N^{-2\aaa^{-2}k}}{N^{2\aaa^{-1}}}\cdot 2^{\aaa^{-1}}<
 \frac{1}{N^{k+1}}\cdot \frac{1}{10}.
 \end{equation}
 Using self-similarity of $F_{0}$, of $\fff_{0}$ and translation invariance of the Hausdorf measure we obtain $$\fff_{0}(x)\geq 
 \fff_{0}\Big 
 (\frac{1}{N^{(k+1)\aaa^{-1}}} \Big )=
 \Big 
 (\frac{1}{N^{(k+1)\aaa^{-1}}} \Big )^{\aaa}=\frac{1}{N^{k+1}}$$
 and hence $g_{1}(x)>0$.
 
 For $x\in F_{0}$, $g_{1}(x)=\fff_{0}(x)$ and we obtain that for
 $0<x\leq 1$ we have $g_{1}(x)>0$. 
 
 Similarly, one can see that 
 $g_{1}(x)<1$ holds for $0\leq x <1$.
 This implies \eqref{*h8*b}.
 \end{proof}


We put
\begin{equation}\label{*h8*c}
{G}_{T,1}'=\bigcup_{n=1}^{\oo}{G}_{n}'
\end{equation}
and denote by ${\cal I}_{T,1}'$ the system of component intervals of ${G}_{T,1}'$.
It is easy to see that ${\cal I}_{T,1}'=\cup_{n=1}^{\oo}{\cal I}_{n}'$.

If $(a,b)\in {\cal I}_{T,1}'$, that is, $(a,b)$ is a transitional interval for
$g_{1}$ then by \eqref{*h7*aa} it is linear on $(a,b)$ with slope
of absolute value $1$.

 \subsection{  Definition of the functions $g_{n}$, $n\geq 2$ and of $g$}\label{*ss34}
For any $(a,b)$ we denote by
$\FFF_{(a,b)}$ the linear mapping $\FFF_{(a,b)}:[a,b]\to [0,1]$,
$\FFF_{(a,b)}(x)=\frac{x-a}{b-a}.$
 
 To define $g_{2}$ for $x\in{F}_{T,1}'=[0,1]\sm{G}_{T,1}'$ we put $g_{2}(x)=g_{1}(x)$.
 To obtain $g_{2} $ on the transitional intervals we modify $g_{1}$. For $x\in {G}_{T,1}'$ if $x\in(a,b)\in{\cal I}_{T,1}'$ then we put
 \begin{equation}\label{*h9*a}
 g_{2}(x)=g_{1}(a)+(g_{1}(b)-g_{1}(a))\cdot g_{1}\Big (\frac{x-a}{b-a}\Big )=
 \end{equation}
 $$g_{1}(a)+(g_{1}(b)-g_{1}(a))\cdot g_{1}\circ\FFF_{(a,b)}(x).$$
 Recall that  $|g_{1}(b)-g_{1}(a)|=|b-a|$.
 We put ${G}_{T,2}'=\cup_{(a,b)\in{\cal I}_{T,1}'}\FFF_{(a,b)}^{-1}({G}_{T,1}')$
 and ${F}_{T,2}'=[0,1]\sm{G}_{T,2}'$.
 The system of component intervals of ${G}_{T,2}'$ is denoted by 
 ${\cal I}_{T,2}'$.
One can easily see that $(a,b)\in{\cal I}_{T,2}'$ if  there exists $(a',b')\in{\cal I}_{T,1}'$
and $(a'',b'')\in{\cal I}_{T,1}'$  such that $(a,b)=\FFF_{(a',b')}^{-1}(a'',b'')$.
One can also see that for $(a,b)\in \cai_{T,2}'$ we have $|g_{2}(b)-g_{2}(a)|=|b-a|$ and $g_{2}$ is linear on these intervals.

From \eqref{*h5*a}, \eqref{*h6*d}  and  \eqref{*h7*a} it follows that
\begin{equation}\label{*h10*a}
b'-a'<\left (\frac{2^{\aaa/2}}{N} \right )^{2\aaa^{-1}}\cdot \frac{2}{N^{2\aaa^{-1}}}<\frac{4}{N^{4}}\text{ for any }(a',b')\in{\cal I}_{T,1}'.
\end{equation}

To define the functions $g_{n}$ we proceed again by induction.
Suppose that $g_{n-1}$ has been already defined and the open set 
${G}_{T,n-1}'$ consists of the transitional intervals of $g_{n-1}$.
The system of these component intervals of ${G}_{T,n-1}'$ is denoted by
${\cal I}_{T,n-1}'$.
We suppose that
\begin{equation}\label{*h10*aa}
b'-a'<\frac{4^{n-1}}{N^{4(n-1)}}\text{ holds for any }(a',b')\in{\cal I}_{T,n-1}'.
\end{equation} 
We also assume that $g_{n-1}$ is linear on the transitional intervals
$(a',b')\in{\cal I}_{T,n-1}'$ and its slope is of absolute value $1$ on these intervals.

To define $g_{n}$ for $x\in {F}_{T,n-1}'=[0,1]\sm{G}_{T,n-1}'$ we put
$g_{n}(x)=g_{n-1}(x).$ For $x\in{G}_{T,n-1}'$  there exists $(a,b)\in{\cal I}_{T,n-1}'$
 such that $x\in(a,b)$.
 Let
 \begin{equation}\label{*h12*c}
 g_{n}(x)=g_{n-1}(a)+(g_{n-1}(b)-g_{n-1}(a))\cdot g_{1}\Big (\frac{x-a}{b-a}\Big )=
 \end{equation}
 $$g_{n-1}(a)+(g_{n-1}(b)-g_{n-1}(a))\cdot g_{1}\circ\FFF_{(a,b)}(x).$$
 We put 
 \begin{equation}\label{*gvtn*}
 {G}_{T,n}'=\bigcup_{(a,b)\in{\cal I}_{T,n-1}'}\FFF_{(a,b)}^{-1}({G}_{T,1}')
 \text{ and }{F}_{T,n}'=[0,1]\sm{G}_{T,n}'.
 \end{equation}
 The system of component intervals of ${G}_{T,n}'$ is denoted by 
 ${\cal I}_{T,n}'$.

From \eqref{*h10*a}, \eqref{*h10*aa}, \eqref{*h12*c} and  \eqref{*gvtn*} it follows that
\begin{equation}\label{*h12*aaa}
b'-a'<\frac{4^{n}}{N^{4n}}\text{ for any }(a',b')\in{\cal I}_{T,n}'.
\end{equation} 
Observe that by \eqref{*h12*c} for any $(a',b')\in{\cal I}_{T,n}'$ the function $g_{n}$ is linear with slope of
 absolute value $1$.
 
 From \eqref{*h8*b},  \eqref{*h10*aa} and  \eqref{*h12*c} it follows that
 $g_{n}(x)$ converges uniformly to a continuous function $g(x)$.
 
 From our construction, especially from \eqref{*h12*c} it follows that
$g$ satisfies  a (restricted) self-similarity property.
For any $n\in\N$ and $(a,b)\in {\cal I}_{T,n}'$
we repeat on $(a,b)$ the construction steps of $g$ 
scaled down by the factor $(b-a)$. Hence
\begin{equation}\label{*h12*cc}
g(x)=g(a)+(g(b)-g(a))\cdot g\circ \FFF_{(a,b)}(x)\text{ for }x\in (a,b)
\end{equation} 
where $(a,b)$ can be any transitional interval, that is $(a,b)\in\cup_{n=1}^{\oo}
{\cal I}_{T,n}'$.

\begin{claim}\label{*claimh28*}
If $N>100$ and $x\in (a,b) \sm {G}_{1}'$, $(a,b)\in{\cal I}_{0}$ then
\begin{equation}\label{*h21*b}
|g(x)-g_{1}(x)|<4\frac{(b-a)^{2\aaa^{-1}}}{N^{4\aaa^{-1}}}<4 \frac{(b-a)^{2}}{N^{4}}.
\end{equation}
\end{claim}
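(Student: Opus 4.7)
The plan is to split on whether $x$ lies in a transitional interval of $g_1$. The union $G_{T,1}' = \bigsqcup_{m \geq 1} G_m'$ is disjoint (since $G_m' \sse F_m$ while later transitional intervals live in $G_m \sse [0,1]\sm F_m$), so the assumption $x \notin G_1'$ forces either $x \in F_{T,1}'$, or $x$ belongs to a unique $(a', b') \in \cai_{n_0}'$ with $n_0 \geq 2$. In the former case, the inductive relation $g_n = g_{n-1}$ on $F_{T,n-1}' \supset F_{T,1}'$ gives $g(x) = g_1(x)$ and the estimate is trivial.

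For the main case I would combine the self-similarity \eqref{*h12*cc} with the linearity of $g_1$ on $(a', b')$. The endpoints $a', b'$ lie in $F_{T,1}'$, since they are boundary points of a level-$n_0$ gap and sit outside every other transitional interval of the construction, so $g(a') = g_1(a')$ and $g(b') = g_1(b')$. Because $g_1$ agrees with $\fff_{n_0}$ on $(a', b')$, \eqref{*h7*aa} makes $g_1$ linear there with slope $\pm 1$, so
$$g_1(x) = g_1(a') + \bigl(g_1(b') - g_1(a')\bigr)\, \FFF_{(a', b')}(x),$$
while \eqref{*h12*cc} gives $g(x) = g_1(a') + (g_1(b') - g_1(a'))\, g(\FFF_{(a', b')}(x))$. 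Subtracting and writing $y = \FFF_{(a', b')}(x) \in [0, 1]$,
$$g(x) - g_1(x) = \bigl(g_1(b') - g_1(a')\bigr)\bigl(g(y) - y\bigr).$$
Using $|g_1(b') - g_1(a')| = b' - a'$ together with the crude bound $|g(y) - y| \leq 1$ (which follows from $0 \leq g \leq 1$, preserved through \eqref{*h12*c} as a convex combination of values in $[0,1]$), I obtain $|g(x) - g_1(x)| \leq b' - a'$.

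It remains to estimate $b' - a'$. By induction within $(a, b) \in \cai_0$, any $(a_1, b_1) \in \cai_{n_0 - 1}$ contained in $(a, b)$ satisfies $b_1 - a_1 \leq (b - a)/N^{n_0 - 1}$, and by \eqref{*h6*d} every level-$n_0$ transitional gap inside $(a_1, b_1)$ has length at most $2\ddd_{a_1, b_1} = 2(b_1 - a_1)^{2\aaa^{-1}}/N^{2\aaa^{-1}}$. Using $n_0 \geq 2$,
$$b' - a' \leq \frac{2 (b-a)^{2\aaa^{-1}}}{N^{2 n_0 \aaa^{-1}}} \leq \frac{2 (b-a)^{2\aaa^{-1}}}{N^{4\aaa^{-1}}} < \frac{4 (b-a)^{2\aaa^{-1}}}{N^{4\aaa^{-1}}},$$
while $(b-a)^{2\aaa^{-1}}/N^{4\aaa^{-1}} < (b-a)^2/N^4$ follows from $b - a < 1$ and $2\aaa^{-1} > 2$.

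The main delicate point is verifying that $a', b' \in F_{T,1}'$, so that the one-step self-similarity comparison applies cleanly and contributions from deeper levels never actually arise in the identity for $g(x) - g_1(x)$; the rest is bookkeeping across the nested scales of the construction, with the constraint $x \notin G_1'$ forcing $n_0 \geq 2$ and hence the crucial exponent $N^{4/\aaa}$ rather than $N^{2/\aaa}$.
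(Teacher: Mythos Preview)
Your proof is correct and in fact cleaner than the paper's. The paper proceeds by telescoping: it writes $g(x)-g_{1}(x)=\sum_{j\ge 1}(g_{j+1}(x)-g_{j}(x))$, observes that $|g_{j+1}(x)-g_{j}(x)|$ is bounded by the length of the component of ${G}_{T,j}'$ containing $x$, shows these lengths shrink geometrically by a factor $4/N^{4}$ at each step, and sums the resulting series to obtain the constant $4$.

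You instead locate the unique transitional interval $(a',b')\in\cal I'_{n_{0}}$ (with $n_{0}\ge 2$) containing $x$, note that $g_{1}$ is already exactly linear there while the restricted self-similarity \eqref{*h12*cc} expresses $g$ on $(a',b')$ as an affine image of $g|_{[0,1]}$, and subtract to get the single identity $g(x)-g_{1}(x)=(g_{1}(b')-g_{1}(a'))\bigl(g(y)-y\bigr)$. This collapses the whole geometric series into one step and yields the sharper bound $|g(x)-g_{1}(x)|\le b'-a'\le 2(b-a)^{2\aaa^{-1}}/N^{4\aaa^{-1}}$. The price is that you must check $a',b'\in F'_{T,1}$ so that $g$ and $g_{1}$ agree at the endpoints; this holds because $a',b'$ are endpoints of closed intervals $I(a_{1},b_{1},j)$, hence lie outside $G_{n_{0}}\supset G'_{m}$ for $m>n_{0}$, outside the open set $G'_{n_{0}}$, and inside $G_{m}$ (so outside $G'_{m}$) for $m<n_{0}$. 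Your approach avoids the inductive bookkeeping of \eqref{*40*bb}--\eqref{*h21*a} entirely; the paper's approach has the minor advantage of not relying on the global self-similarity identity \eqref{*h12*cc}, working instead only with the one-step recursion \eqref{*h12*c}.
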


\begin{proof}[Proof of Claim \ref{*claimh28*}]
If $x\in (a,b)\sm {G}_{1}'$ then  there exists $j\in \{ 1,...,N \}$
 such that $x\in I(a,b,j)$. The length of the $I(a,b,j)$ intervals is less than
 $(b-a)/N$ and hence if $(a',b')\in {\cal I}'_{n}$ is a component of ${G}_{T,1}'$
 in $I(a,b,j)$ then
 \begin{equation}\label{*h20*a}
 b'-a'\leq 2\frac{(b-a)^{2\aaa^{-1}}}{N^{2\aaa^{-1}}}\cdot \frac{1}{N^{2\aaa^{-1}}}.
 \end{equation}
 By \eqref{*h8*b}, \eqref{*h9*a} and \eqref{*h20*a}
 \begin{equation}\label{*h28*aa}
 |g_{2}(x)-g_{1}(x)|\leq |b'-a'|<2\frac{(b-a)^{2\aaa^{-1}}}{N^{4\aaa^{-1}}}< \frac{2(b-a)^{2\aaa^{-1}}}{N^{4}}.
 \end{equation}
 In general, suppose that $j\geq 2$ is given and if
 $(a',b')\in G'_{T,j-1}$ is the component containing $x$
 then 
 \begin{equation}\label{*40*bb}
 b'-a'<2\frac{(b-a)^{2\aaa^{-1}}}{N^{4\aaa^{-1}}}\cdot \left (\frac 4{N^{4}} \right)^{j-2}.
 \end{equation}
 If $x\in {F}_{T,j}'$ then $g_{j+1}(x)=g_{j}(x)$. 
 Consider the case $x\in {G}_{T,j}'$.
 By \eqref{*h7*a}, \eqref{*h10*a}, \eqref{*gvtn*} and \eqref{*h20*a} if $(a'',b'')$ is a component of
 ${G}_{T,j}'$ containing $x$ then 
 \begin{equation}\label{*h20*b}
 b''-a''<|b'-a'|\cdot \frac{4}{N^{4}}\leq  2\frac{(b-a)^{2\aaa^{-1}}}{N^{4\aaa^{-1}}}
 \cdot \left (\frac 4{N^{4}} \right)^{j-1}.
 \end{equation}
 As we obtained \eqref{*h28*aa} this implies 
 \begin{equation}\label{*h21*a}
 |g_{j+1}(x)-g_{j}(x)|\leq |b''-a''|< 2\frac{(b-a)^{2\aaa^{-1}}}{N^{4\aaa^{-1}}}
 \cdot \left (\frac 4{N^{4}} \right)^{j-1}
 .
 \end{equation}
 Repeating the above argument we infer that for $x\in(a,b)\sm{G}_{1}'$ we have
 \begin{equation}\label{*h21*bb}
 |g(x)-g_{1}(x)|\leq \sum_{j=1}^{\oo} |g_{j+1}(x)-g_{j}(x)|<
 \end{equation}
 $$2\frac{(b-a)^{2\aaa^{-1}}}{N^{4\aaa^{-1}}} \Big ( 1+\frac{4}{N^{4}}+\frac{4^{2}}{N^{8}}+... \Big )< 4\frac{(b-a)^{2\aaa^{-1}}}{N^{4\aaa^{-1}}}.$$
This completes the proof of Claim \ref{*claimh28*}.
\end{proof}

\subsection{  Definition and H\"older property of $f$}\label{*ss35} We put $f(x)=\int_{0}^{x}
g(t)dt.$

In the rest of the proof we need to verify that $f$ has the properties claimed in Theorem \ref{*5b*}.
\begin{claim}\label{*ghaaa*}
The function $g$ is in $C^{\aaa}[0,1]$ and hence $f\in C^{1+\aaa}[0,1]$.
\end{claim}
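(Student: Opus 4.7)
The plan is to show by induction on $n$ that each $g_n$ is H\"older-$\aaa$ on $[0,1]$ with constant $C_n^*$ that stays bounded as $n\to\oo$, and then to pass to the uniform limit $g=\lim g_n$. The base case $n=1$ is \eqref{*h8*d}, giving $C_1^*\leq 2C_{\fff_0}$. The statement $f\in C^{1+\aaa}[0,1]$ then follows immediately from continuity of $g$ (as a uniform limit of continuous functions) together with the paper's definition of $C^{1+\aaa}[0,1]$, since $f(x)=\int_0^x g(t)\,dt$ gives $f'=g$.

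For the inductive step the key observation is that the defining formula \eqref{*h12*c} presents $g_n$ on each $(a,b)\in \cai_{T,n-1}'$ as an affine image of $g_1$: horizontal rescaling by $|b-a|$ composed with vertical rescaling by $|g_{n-1}(b)-g_{n-1}(a)|=|b-a|$ (using that $g_{n-1}$ has slope $\pm 1$ on its transitional intervals). A direct computation using $\aaa<1$ and the H\"older constant $2C_{\fff_0}$ of $g_1$ then yields the crucial local estimate
\[ |g_n(x)-g_n(y)|\leq 2C_{\fff_0}\,|b-a|^{1-\aaa}\,|x-y|^{\aaa}\qquad\text{for all }x,y\in[a,b]. \]
The prefactor $|b-a|^{1-\aaa}$ is the whole point of the argument: it is smaller than $1$ and, since transitional intervals shrink geometrically in their nesting level, it will decay geometrically in $n$.

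To upgrade this local bound to a global one, I split arbitrary $x,y\in[0,1]$ into four cases according to whether each lies in $F_{T,n-1}'$ or inside a transitional interval of $g_{n-1}$, inserting the nearest endpoints of the relevant transitional intervals (which lie in $F_{T,n-1}'$, where $g_n=g_{n-1}$) as intermediate points. Inside transitional intervals I apply the local estimate above; on $F_{T,n-1}'$ I apply the induction hypothesis $C_{n-1}^*$. Setting $L_{n-1}:=\sup_{(a,b)\in \cai_{T,n-1}'}|b-a|$ and noting that at most two transitional pieces are ever involved, each case is dominated by $(C_{n-1}^*+4C_{\fff_0}L_{n-1}^{1-\aaa})|x-y|^{\aaa}$, which produces the recursion $C_n^*\leq C_{n-1}^*+4C_{\fff_0}L_{n-1}^{1-\aaa}$.

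By \eqref{*h10*aa} and \eqref{*h12*aaa} we have $L_{n-1}\leq (4/N^4)^{n-1}$, so $L_{n-1}^{1-\aaa}$ is the general term of a geometric series whose ratio is $<1$ for $N$ large, and the telescoping recursion yields a uniform bound $C_n^*\leq C^*<\oo$. Passing to the uniform limit gives $|g(x)-g(y)|\leq C^*|x-y|^{\aaa}$, as required. The main obstacle to navigate is that a naive case analysis would leave a multiplicative constant $>1$ at every inductive step and hence produce a bound growing like $O(n)$; avoiding this forces one to track the prefactor $|b-a|^{1-\aaa}$ in every case and never discard it for the crude bound $1$.
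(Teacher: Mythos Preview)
Your argument is correct. The local estimate on each $(a,b)\in\cai_{T,n-1}'$ is right (the affine formula \eqref{*h12*c} together with $|g_{n-1}(b)-g_{n-1}(a)|=|b-a|$ and \eqref{*h8*d} gives exactly your $2C_{\fff_0}|b-a|^{1-\aaa}|x-y|^{\aaa}$), the four-case splitting with inserted endpoints is sound because each inserted segment has length at most $|x-y|$, and the recursion $C_n^*\leq C_{n-1}^*+4C_{\fff_0}L_{n-1}^{1-\aaa}$ telescopes to a finite limit thanks to the geometric decay \eqref{*h12*aaa}. Passing to the uniform limit is unproblematic.

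The paper takes a genuinely different route. Rather than induct on the approximants $g_n$, it works directly with the limit $g$ and exploits the exact self-similarity \eqref{*h12*cc}. Its key lemma is an \emph{endpoint} H\"older estimate $|g(x)-g(0)|\leq C_g'|x|^{\aaa}$ and $|g(1)-g(x)|\leq C_g'|1-x|^{\aaa}$, whose proof requires a somewhat delicate analysis of how far transitional intervals in $\cai_{T,1}'$ can sit from $0$ and $1$ relative to their own length (inequalities \eqref{*h16*aa}--\eqref{*h17*bb}). For general $x<y$ the paper then finds the first level $j$ at which $x,y$ fall into different components of $G_{T,j}'$, uses self-similarity to reduce to $j=1$, inserts the separating endpoints $b_x,a_y\in F_{T,1}'$, and bounds the two outer pieces by the endpoint lemma applied on the rescaled copies. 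Your approach avoids the endpoint analysis entirely by tracking the prefactor $|b-a|^{1-\aaa}$ through the induction; the paper's approach avoids the induction on $n$ by leveraging self-similarity of $g$ once and for all. Your argument is somewhat more elementary and yields the result with less geometric input; the paper's argument gives a slightly sharper explicit constant ($6C_{\fff_0}+2$) and isolates the endpoint estimate \eqref{*h14*aa} as a standalone fact.
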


 \begin{proof}[Proof of Claim \ref{*ghaaa*}.]
 First we prove a very special case of this claim. Namely, we show that  there exists 
 a constant $C_{g}'$  such that 
 \begin{equation}\label{*h14*aa}
 |g(x)-g(0)|< C_{g}' |x|^{\aaa}\text{ and }
 |g(1)-g(x)|< C_{g}' |1-x|^{\aaa}.
 \end{equation}
 Suppose $x\in[0,1]$. If $x\in{F}_{T,1}'$ then $g(x)=g_{1}(x)$ and from
 $0\in {F}_{T,1}'$ and \eqref{*h8*d} it follows that
 \begin{equation}\label{*h15*aa}
 |g_{1}(x)-g_{1}(0)|=|g(x)-g(0)|\leq 2 C_{\fff_{0}}|x|^{\aaa}.
 \end{equation}
 
 Suppose $x\in {G}_{T,1}'=[0,1]\sm {F}_{T,1}'.$
 The definition of $F_{0}$ implies that the largest component
 of ${G}_{0}$ in $[N^{-\aaa^{-1}},1-N^{-\aaa^{-1}}]$
is of length less than $ 1/(N-1)< \frac{2^{\aaa/2}}{N}$.
 By \eqref{*h6*d}  and by induction the largest component of any ${G}_{n}$
for any $n \geq 1$ is of length less than $1/((N-1)N^{n})<\frac{2^{\aaa/2}}{N^{n+1}}<2^{\aaa/2}/N$.
This implies by \eqref{*h6*d} that if $(a',b')\sse [N^{-\aaa^{-1}},1-N^{-\aaa^{-1}}]$ is a transitional interval, that is $(a',b')\in {\cal I}_{T,1}'$
then $b'-a'\leq 4\cdot N^{-4\aaa^{-1}}$.
Hence, for sufficiently large $N$
\begin{equation}\label{*h16*aa}
\frac{1}{N^{\aaa^{-1}}}\leq a'\text{ and }b'-a'\leq \frac{4}{N^{4\aaa^{-1}}}\leq \frac{4}{N^{3\aaa^{-1}}}\cdot a'<a'.
\end{equation}

By self-similarity of $F_{0}$ one can see analogously
by using \eqref{*Nak*}, that if $(a',b')\sse [N^{-(k+1)\aaa^{-1}},N^{-k\aaa^{-1}}]$, $k=1,...$ and $(a',b')\sse {\cal I}_{T,1}'$ then
for sufficiently large $N$
$$b'-a'< \frac{2}{N^{2\aaa^{-1}}}\cdot  \frac{2}{N^{2\aaa^{-1}} 
N^{2\aaa^{-2}k}}< \frac{1}{N^{(2k+2)\aaa^{-1}}}$$
and
\begin{equation}\label{*h17*aa}
\frac{1}{N^{(k+1)\aaa^{-1}}}\leq a'\text{ and }b'-a'\leq
\frac{1}{N^{(2k+2)\aaa^{-1}}}< \frac{1}{N^{k \aaa^{-1}}}a'<a'.
\end{equation}
 Similar estimates are also valid at the other end
 of $[0,1]$, that is if $(a',b')\sse [1-N^{-(k+1)\aaa^{-1}},1-N^{-k\aaa^{-1}}]$ and $(a',b')\in{\cal I}_{T,1}'$ then
 \begin{equation}\label{*h17*bb}
 \frac{1}{N^{(k+1)\aaa^{-1}}}\leq 1-b'\text{ and }
 b'-a'\leq 1-b'.
 \end{equation}
 We know from \eqref{*h8*d} that $g_{1}$ is H\"older $\aaa$
 with constant $2C_{\fff_{0}}$ on $[0,1]$.
 
 Suppose $x\in[0,1]$. If $x\in{F}_{T,1}'$ then $g_{1}(x)=g(x)$
 and hence
 \begin{equation}\label{*h17*cc}
 |g(x)|=|g(0)-g(x)|\leq
 2C_{\fff_{0}}|x|^{\aaa}\text{ and }
|g(x)-1|=|g(x)-g(1)|\leq 2C_{\fff_{0}}|1-x|^{\aaa}.
 \end{equation}
 If $x\in{G}_{T,1}'$ then  there exists $(a',b')\in {\cal I}_{T,1}'$
  such that $x\in(a',b')$. We also know that
  $|g(b')-g(a')|=|b'-a'|$ and for $x\in(a',b')$ we have
  $\min\{ g_{1}(a'),g_{1}(b') \}\leq g(x)\leq \max\{ g_{1}(a'),g_{1}(b') \}.$
That is, by \eqref{*h16*aa} and \eqref{*h17*aa}
\begin{equation}\label{*h18*ca}
|g(x)-g_{1}(a')|\leq |b'-a'|<a'.
\end{equation}
Since $|g(a')|=|g_{1}(a')|\leq 2C_{\fff_{0}}|a'|^{\aaa}\leq 2C_{\fff_{0}}|x|^{\aaa}$ and
$|g(x)-g_{1}(a')|<a'<|x|$
we have $|g(x)-g(0)|=|g(x)|\leq(2C_{\fff_{0}}+1)|x|^{\aaa}$.

A similar argument can show that
$|g(1)-g(x)|=|1-g(x)|\leq (2C_{\fff_{0}}+1)|1-x|^{\aaa}$.
Hence we can select $C_{g}'=2C_{\fff_{0}}+1$ in \eqref{*h14*aa}.

Next we show that $g$ is H\"older $\aaa$. Suppose that $x,y\in[0,1]$, $x<y.$
If $x,y\in{F}_{T,1}'$ then $g(x)=g_{1}(x)$, $g(y)=g_{1}(y)$ and by \eqref{*h8*d}
we have 
\begin{equation}\label{*h19*aa}
|g(x)-g(y)|\leq 2C_{\fff_{0}}|x-y|^{\aaa}.
\end{equation} 
One needs to consider several more cases.
We discuss in detail the case when $x,y\not\in{F}_{T,1}'$, that is $x,y\in{G}_{T,1}'$.
The other cases when one of $x$ and $y$ belongs to ${F}_{T,1}'$ and the other to
${G}_{T,1}'$ are analogous and are left to the reader.
The sets ${G}_{T,j}'$ are nested and we can choose $j$  such that $x$ and $y$
are not in the same component of ${G}_{T,j}'$ but $x$ and $y$ belong to the
same component of ${G}_{T,j'}'$ for $j'<j$.

If $j=1$ then set $(a,b)=(0,1)$ if $j>1$ then denote by $(a,b)$ the component
of ${G}_{T,j-1}'$ containing $x$ and $y$. By  \eqref{*h12*c} and  \eqref{*h12*cc}, $g_{j}|_{(a,b)}$
is a scaled down, similar copy of $g_{1}|_{(0,1)}$ and $g|_{(a,b)}$
is similar to $g|_{(0,1)}$. 
Hence, we can suppose that $j=1$ and $(a,b)=(0,1)$. 
Thus $x\in (a_{x},b_{x})\in {\cal I}_{T,1}'$, $y\in (a_{y},b_{y})\in {\cal I}_{T,1}'$
and $b_{x}<a_{y}$.
Since $b_{x},a_{y}\in {F}_{T,1}'$ we have
\begin{equation}\label{*h20*aa}
|g(b_{x})-g(a_{y})|=|g_{1}(b_{x})-g_{1}(a_{y})|\leq
2C_{\fff_{0}}|b_{x}-a_{y}|^{\aaa}<2C_{\fff_{0}}|x-y|^{\aaa}.
\end{equation}
Again by \eqref{*h12*cc}, $g|_{(a_{x},b_{x})}$ is similar to $g|_{(0,1)}$
hence, by what we have already shown in \eqref{*h14*aa}
$$|g(x)-g(b_{x})|=(b_{x}-a_{x})|g(\FFF_{(a_{x},b_{x})}(x))-g(1)| \
\leq (b_{x}-a_{x}) C_{g}'|\FFF_{(a_{x},b_{x})}(x)-1|^{\aaa}
\leq
$$ $$
 (b_{x}-a_{x}) C_{g}'\Big |\frac{x-a_{x}}{b_{x}-a_{x}}-\frac{b_{x}-a_{x}}{b_{x}-a_{x}}\Big |^{\aaa}=
 |b_{x}-a_{x}|^{1-\aaa} C_{g}'|x-b_{x}|^{\aaa}<$$ $$
 C_{g}'|x-b_{x}|^{\aaa}<
(2C_{\fff_{0}}+1)|y-x|^{\aaa}$$
and similarly
$$|g(y)-g(a_{y})|\leq (2C_{\fff_{0}}+1)|y-x|^{\aaa}.$$
Thus $|g(x)-g(y)|\leq (6C_{\fff_{0}}+2)|y-x|^{\aaa}.$
 \end{proof}

 \subsection{  Estimates of the size of the transitional intervals}\label{*ss36}
 \begin{definition}\label{*defSGa*}
 For an open set $G\sse [0,1]$ if $\cai_{G}$ denotes the system of component
 intervals of $G$ we put
 $$\cas(G,\aaa)=\sum_{(a,b)\in \cai_{G}}|b-a|^{\aaa}.$$
 If $\aaa<1$ then it is clear that $\lll(G)\leq \cas(G,\aaa).$
 \end{definition}
 
 By \eqref{*h6*a}, \eqref{*h6*d} and \eqref{*h7*a} for any $(a,b)
 \in {\cal I}_{n-1}$ we have when we use sufficiently large $N$
 \begin{equation}\label{h21*aa}
 \cas({G}_{n}'\cap (a,b),\aaa)<\frac{ 2^{\aaa}(b-a)^{2}}{N^{2}}(N+1)<
 (b-a)\frac{5}{N^{n+1}}.     
 \end{equation}
 This implies 
 \begin{equation}\label{*h21*bb}
 \cas({G}_{n}',\aaa)<\frac{5}{N^{n+1}}\sum_{(a,b)\in{\cal I}_{n-1}}(b-a)<\frac{5}{N^{n+1}}.
 \end{equation}
 Since the sets ${G}_{n}'$ are disjoint we obtain
 \begin{equation}\label{*h11*a}
 \cas({G}_{T,1}',\aaa)\leq \sum_{n=1}^{\oo}\cas({G}_{n}',\aaa)<\frac{5}{N}\sum_{n=1}^{\oo}
 \frac{1}{N^{n}}<\frac{1}{N}
 \end{equation}
 if $N>6$.
Next we show by mathematical induction that 
 \begin{equation}\label{*h13*a}
 \cas({G}_{T,n}',\aaa)=\sum_{(a,b)\in {\cal I}_{T,n}'} (b-a)^{\aaa}\leq \cas({G}_{T,1}',\aaa)^{n}<\frac{1}{N^{n}}.
 \end{equation}
 The case $n=1$ is \eqref{*h11*a}.
 Suppose that \eqref{*h13*a} holds for an $n\in\N$.
 By \eqref{*gvtn*} for any $(a,b)\in {\cal I}_{T,n}'$ we have
 ${G}_{T,n+1}'\cap(a,b)=\FFF_{(a,b)}^{-1}({G}_{T,1}')$ and hence
 \begin{equation}\label{*h12*a}
 \cas({G}_{T,n+1}',\aaa)\leq \sum_{(a,b)\in {\cal I}_{T,n}'} (b-a)^{\aaa}\cas({G}_{T,1}',\aaa)=
 \end{equation}
 $$\cas({G}_{T,1}',\aaa)\sum_{(a,b)\in {\cal I}_{T,n}'}(b-a)^{\aaa}\leq
 \cas({G}_{T,1}',\aaa)^{n+1}<\frac{1}{N^{n+1}}.$$
 
 \subsection{  Estimates of the size of sets $A$ on which $f$ can be convex or concave}\label{*ss37}
 Suppose $A\sse [0,1]$ and $f|_{A}$ is convex, the concave case is similar.
 For $k\in \N$ we want to estimate $\can_{N,k}(A)$. Observe that $F_{0}\sse
 {F}_{T,1}'$ and $g|_{{F}_{T,1}'}=g_{1}$ and $g_{1}|_{F_{0}}=\fff_{0}$.
Hence we have $g=\fff_{0}$ on $F_{0}$. 

For $j=1,...$  we denote by ${G}_{T,j,k}'$ 
the union of those components of ${G}_{T,j}'$ which are of length smaller than
$N^{-k}$, that is,
$${G}_{T,j,k}'=\cup\{(a,b) \in {\cal I}_{T,j}',\ |b-a|\leq {N^{-k}}  \}.$$
We put ${G}_{T,j,k}''={G}_{T,j}'\sm {G}_{T,j,k}'$, that is, ${G}_{T,j,k}''$ contains all
transitional intervals of $g_{j}$  which are of length longer than
$N^{-k}$. Its components will be denoted by $${\cai}_{T,j,k}''=\{ (a,b)\in{\cal I}_{T,j}':(a,b)\sse {G}_{T,j,k}'' \}.$$

In the next claim we estimate $\can_{N,k}(A\sm {G}_{T,1,k}'')$.
\begin{claim}\label{*nnk*}
There exists a constant $C_{g_1}>1$ not depending on $k$
 such that 
 \begin{equation}\label{*h15*a}
 {\can}_{N,k}(A\sm {G}_{T,1,k}'')<C_{g_1}\cdot N^{\aaa k} \text{ for }k=0,1,....
 \end{equation}
\end{claim}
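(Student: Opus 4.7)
The plan is to apply the heuristic combinatorial observation from the introduction recursively through the levels of the construction, and then sum the resulting bounds over the components of $\cai_0$ using the self-similarity of $F_0$.

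First I would establish a perturbed combinatorial key lemma: if $f|_A$ is convex then for every $n\ge 1$ and every $(a,b)\in\cai_{n-1}$, at most two of the $N$ sub-intervals $I(a,b,j)$ contain three or more points of $A$. On each $I(a,b,j)$ the function $\fff_n$ is identically $\fff_{n-1}(a)+(-1)^{j+1}\ddd_{a,b}$, while \eqref{*h24*bb}, \eqref{*h25*aa} and \eqref{*h21*b} together give $|g-\fff_n|=O(\ddd_{a,b}/N^{2/\aaa})$, negligible relative to $\ddd_{a,b}$ for $N$ large. Pick $x_1<x_2<x_3\in A\cap I(a,b,j_1)$ and $x_5<x_6<x_7\in A\cap I(a,b,j_2)$ with $j_1<j_2$. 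The secant slopes $(x_1,x_2)$ and $(x_2,x_3)$ lie within $O(\ddd_{a,b}/N^{2/\aaa})$ of $\fff_{n-1}(a)+(-1)^{j_1+1}\ddd_{a,b}$, while $(x_5,x_6)$ and $(x_6,x_7)$ approximate $\fff_{n-1}(a)+(-1)^{j_2+1}\ddd_{a,b}$. Convexity of $f$ on $A$ forces the full chain of secant slopes between consecutive A-points from $x_1$ to $x_7$ to be non-decreasing, so $(-1)^{j_1+1}\le(-1)^{j_2+1}$; moreover if $j_1,j_2$ had the same parity then every intermediate secant slope would be forced equal to the common value, which is impossible because such a secant must cross an opposite-sign sub-interval $I(a,b,j')$ where $g$ is near the opposite constant. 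Hence at most one even and one odd $j$ can produce $I(a,b,j)$ containing three or more A-points.

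Second I would unroll a recursion. Set $\Psi(I,k)=\can_{N,k}\bigl((A\sm G''_{T,1,k})\cap I\bigr)$. Trivially $\Psi(I,k)\le 2$ whenever $|I|\le 1/N^k$. For a parent $(a,b)\in\cai_{n-1}$ with $b-a>1/N^k$, the lemma splits the $N$ sub-intervals into at most two ``bad'' children (to be recursed on) and at least $N-2$ ``good'' ones containing $\le 2$ A-points, each contributing at most $2$ to $\Psi$; the transitional intervals $\cai'_n\cap(a,b)$ contribute nothing when they are long (then they lie in $G''_{T,1,k}$ and are excluded from $A\sm G''_{T,1,k}$) and at most $2(N+1)$ when they are short. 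Hence $\Psi((a,b),k)\le 2\max_{j\text{ bad}}\Psi(I(a,b,j),k)+O(N)$. Iterating down to the depth $d\approx k+\log_N(b-a)$ at which $\cai$-intervals first shrink below $1/N^k$ we obtain $\Psi((a,b),k)\le O(N)\cdot 2^{k+\log_N(b-a)}=O\bigl(N\cdot 2^k(b-a)^{\log_N 2}\bigr)$ for each $(a,b)\in\cai_0$ with $b-a>1/N^k$.

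Third I would sum over $\cai_0$ and $F_0$. Components of $\cai_0$ of length at most $1/N^k$ all lie in $\caf_{N,k}(F_0)$ by \eqref{*h5*c}, so their joint contribution to $\can_{N,k}$ is at most $\can_{N,k}(F_0)\le C_{F_0}N^{\aaa k}$ via \eqref{*h5*b}. Longer components, grouped by the self-similarity level $\ell<k\aaa$ of $F_0$ (with $(N-1)N^{\ell-1}$ components of length $\sim N^{-\ell/\aaa}$ at level $\ell$), contribute at most $\sum_{\ell<k\aaa}(N-1)N^{\ell-1}\cdot O(N\cdot 2^k\cdot 2^{-\ell/\aaa})=O(N\cdot 2^k)\sum_{\ell<k\aaa}N^{\ell(1-\log_N 2/\aaa)}$; for $N>2^{1/\aaa}$ one has $\log_N 2<\aaa$, so this is a geometric series dominated by $\ell\approx k\aaa$, evaluating to $O(N^{\aaa k+1})$ since $N^{-k\log_N 2}=2^{-k}$. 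Adding $\can_{N,k}(A\cap F_0)\le C_{F_0}N^{\aaa k}$ from \eqref{*h5*b} yields the claim with $C_{g_1}$ depending only on $N$, $\aaa$ and $C_{F_0}$, but not on $k$. The main obstacle is Step~1: rigorously ensuring $|g-\fff_n|\ll\ddd_{a,b}$ at every level so that the heuristic combinatorial reasoning applies---this is where the telescoping H\"older estimates of Sections \ref{*subsfffn}--\ref{*ss34} enter.
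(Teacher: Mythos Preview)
Your proposal is correct and follows essentially the paper's approach: a combinatorial lemma at each level (the paper's Claims~\ref{*claimh31*} and~\ref{*claimh31b*}), a doubling recursion giving $O(N\cdot 2^{k-k'})$ per component of $\cai_0$, and a geometric summation. The paper differs only in details: it uses a two-point rather than three-point threshold (two points already pin down one secant slope via the Mean Value Theorem, which suffices), it compares $f$ with $\tilde f=\int\fff_1$ through \eqref{*h22*b} rather than estimating $g$ directly, and it counts long $\cai_0$-components via the box-count \eqref{*h16*a} rather than the explicit self-similar level structure of $F_0$.

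One caution on your Step~1: the same-parity argument is stated imprecisely. The phrase ``such a secant must cross an opposite-sign sub-interval where $g$ is near the opposite constant'' is not itself a proof, since the Mean Value Theorem does not place $\xi$ in any particular sub-interval. The rigorous version is that the middle secant slope $s(x_3,x_5)$ equals the \emph{average} of $g$ over $(x_3,x_5)$; direct integration over the intervening $I(a,b,j')$'s (ignoring the negligible transitional pieces) bounds this average away from the common value $\fff_{n-1}(a)\pm\ddd_{a,b}$ by at least $2\ddd_{a,b}/3+O(\ddd_{a,b}/N^{2/\aaa})$, which yields the required contradiction with monotonicity of the secant chain. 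The paper handles this via the explicit integral estimates \eqref{*h26*b}--\eqref{*h27*b}.
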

\begin{proof}[Proof of Claim \ref{*nnk*}]
By \eqref{*h5*b} we have
\begin{equation}\label{*h15*b}
{\can}_{N,k}(F_{0}\cap(A\sm{G}_{T,1,k}''))\leq {\can}_{N,k}(F_{0})< C_{F_{0}}N^{\aaa k}.
\end{equation}

By remark \eqref{*h5*c}, $\caf_{N,k}(F_{0})$ contains all components $(a,b) \in {\cal I}_{0}$
for which $b-a\leq N^{-k}$.

Suppose that $(a,b)\in {\cal I}_{0}$ and $b-a>N^{-k}$. Select $k'<k$
 such that 
 \begin{equation}\label{*h15*c}
 N^{-k'-1}<b-a\leq N^{-k'}.
 \end{equation}
 By property \eqref{*h5*c}, $\caf_{N,k'}(F_{0})$ contains $(a,b)$ and hence by
 \eqref{*h5*b}
 \begin{equation}\label{*h16*aa}
 {\can}_{N,k'}(\cup\{ (a,b)\in{\cal I}_{0}:N^{-k'-1}<b-a\leq N^{-k'} \})\leq
 C_{F_{0}}N^{\aaa k'}.
 \end{equation}
 In later arguments we will need an upper estimate of the number of
 the intervals $(a,b)\in{\cal I}_{0}$ satisfying $N^{-k'-1}<b-a\leq N^{-k'}$.
 By \eqref{*h16*aa} these intervals can be covered by at most $C_{F_{0}}N^{\aaa k'}$ many $N^{-k'}$ grid intervals, since the length of the intervals $(a,b)$ is at least
 $N^{-k'-1}$ we obtain
 \begin{equation}\label{*h16*a}
 \# \{ (a,b)\in{\cal I}_{0}:N^{-k'-1}<b-a\leq N^{-k'} \}\leq N\cdot C_{F_{0}}N^{\aaa k'}.
 \end{equation}

 Recall that we want to estimate ${\can}_{N,k}((a,b)\cap (A\sm {G}_{T,1,k}'')).$
 We have $g_{1}(x)=g(x)$ for $x\in {F}_{T,1}'$. 
 
 For $x\in {F}_{T,1}'\cap (a,b),$ $(a,b)\in {\cal I}_{0} $ by \eqref{*h25*aa}
 \begin{equation}\label{*h25*gfe*}
 |g(x)-\fff_{1}(x)|=|g_{1}(x)-\fff_{1}(x)|<\frac{(b-a)^{2}}{N^{2}}\cdot \frac{2}{N^{2}}.
 \end{equation}
 By the definition of the functions $g_{j}(x)$ on the transitional intervals,
 from \eqref{*h8*b} and
\eqref{*h12*c} we infer
\begin{equation}\label{*h29*aa}
0< g(x)< 1 \text{ for all }x\in(0,1) \text{ and }g(0)=0,\  g(1)=1.
\end{equation}
 
 The transitional intervals $(a',b')\in{\cal I}_{1}'$ which are in $(a,b)\in{\cal I}_{0}$
 either satisfy $b'-a'> N^{-k}$ and then they do not contain any 
 point of $A\sm {G}_{T,1,k}''$, or if $b'-a'\leq N^{-k}$ then each of them can be
 covered by no more than two intervals of the form 
 $[(j-1)\cdot N^{-k},j\cdot N^{-k}]$. 
 There are at most $N+1$ many of them
and hence
\begin{equation}\label{*h17*a}
{\can}_{N,k}((a,b) \cap {G}_{1}'\cap(A\sm{G}_{T,1,k}''))<2(N+1)<3N.
\end{equation} 

If $x\in(a,b) \in {\cal I}_{0}$,  then we can refine \eqref{*h25*aa}
by using  \eqref{*h24*bb} 
\begin{equation}\label{*h17*b}
|g_{1}(x)-\fff_{1}(x)|<\frac{(b-a) ^{2\aaa^{-1}}}{N^{2\aaa^{-1}}}\cdot \frac{1}{N^{2\aaa^{-1}}}\Big (1+\frac{1}{N^{2\aaa^{-1}}}+... \Big )<
\end{equation}
$$2\frac{(b-a)^{2\aaa^{-1}}}{N^{2\cdot 2\aaa^{-1}}}\text{ if }N\text{ is sufficiently large.}$$

By \eqref{*h17*b} and by \eqref{*h21*b} in Claim \ref{*claimh28*}
\begin{equation}\label{*h21*c}
|g(x)-\fff_{1}(x)|< 6\frac{(b-a)^{2\aaa^{-1}}}{N^{4\aaa^{-1}}
}\text{ holds for }
x\in (a,b)\sm{G}_{1}'.
\end{equation}

 The transitional intervals, the components of ${G}_{1}'\cap (a,b)$
 are of length less than $2\ddd_{a,b}=2\frac{(b-a)^{2\aaa^{-1}}}{N^{2\aaa^{-1}}}$ by \eqref{*h6*d} and \eqref{*h7*a}.

For $(a',b')\in{\cal I}_{1}'$, that is, for components of ${G}_{1}'$ 
by \eqref{*h29*aa}
when $g(a')<g(b')$
we have
for $x\in (a',b')$
$$g(a')\leq g(x)\leq g(b')=g(a')+(b'-a')\text{ and }$$ $$
g(a')=\fff_{1}(a')\leq \fff_{1}(x)\leq \fff_{1}(b')=g(b')=g(a')+(b'-a')$$
and an analogous statement is valid when 
$g(a')>g(b')$.
Since by the (restricted) self-similarity property of $g$, that is by \eqref{*h12*cc}
we have
$$g(x)=g(a')+(g(b')-g(a'))\cdot g\Big (\frac{x-a'}{b'-a'}\Big )$$
we obtain
$$|g(x)-\fff_{1}(x)|\leq |b'-a'|<2\frac{(b-a)^{2\aaa^{-1}}}{N^{2\aaa^{-1}}}.$$
By \eqref{*h6*d}, $$\lll((a,b)\cap{G}_{1}')=2N \frac{(b-a)^{2\aaa^{-1}}}{N^{2\aaa^{-1}}}<2 \frac{(b-a)^{2\aaa^{-1}}}{N^{\aaa^{-1}}}$$
and hence
\begin{equation}\label{*h22*a}
\int_{(a,b)\cap {G}_{1}'}|g(t)-\fff_{1}(t)|dt< 2\frac{(b-a)^{2\aaa^{-1}}}{N^{\aaa^{-1}}}
\cdot 2\frac{(b-a)^{2\aaa^{-1}}}{N^{2\aaa^{-1}}}<
\end{equation} 
(using that by \eqref{*h5*a} and \eqref{*h7*c}, $b-a<2^{\aaa/2}N^{-1}$)
$$8\frac{(b-a)^{3\aaa^{-1}}}{N^{4\aaa^{-1}}}<16 \frac{(b-a)^{2\aaa^{-1}}}{N^{5\aaa^{-1}}}.$$

Set ${\widetilde{f}}(x)=f(a)+\int_{a}^{x}\fff_{1}(t)dt$. Then by \eqref{*h21*c}
and \eqref{*h22*a} we have for $x\in(a,b)$
\begin{equation}\label{*h22*b}
|{\widetilde{f}}(x)-f(x)|\leq \int_{a}^{x}|g(t)-\fff_{1}(t)|dt\leq
\end{equation}
$$\int_{[a,x]\sm{G}_{1}'}|g(t)-\fff_{1}(t)|dt+
\int_{[a,x]\cap{G}_{1}'}|g(t)-\fff_{1}(t)|dt\leq$$
$$6\frac{(b-a)^{2\aaa^{-1}}}{N^{4\aaa^{-1}}}(x-a)+
8\frac{(b-a)^{3\aaa^{-1}}}{N^{4\aaa^{-1}}}<
6\frac{(b-a)^{2\aaa^{-1}}}{N^{4\aaa^{-1}}}(x-a)+
16 \frac{(b-a)^{2\aaa^{-1}}}{N^{5\aaa^{-1}}}.
$$

We consider $(a,b)\in{\cal I}_{0}$ such that $b-a>N^{-k},$ and $k'<k$ satisfies \eqref{*h15*c}.
Suppose that for a $j\in \{ 1,...,N \}$
we have 
\begin{equation}\label{*h23*a}
t_{j}\in I(a,b,j).
\end{equation}
Then $t_{j}\in (a,b)\sm {G}_{1}'$ and by \eqref{*h21*c}
$$|g(t_{j})-\fff_{1}(t_{j})|<6\frac{(b-a)^{2\aaa^{-1}}}{N^{4\aaa^{-1}}}\text{ and }$$
$$\fff_{1}(t_{j})=\fff_{1}(a)+(-1)^{j+1}\frac{(b-a)^{2\aaa^{-1}}}{N^{2\aaa^{-1}}}=
g(a)+(-1)^{j+1}\frac{(b-a)^{2\aaa^{-1}}}{N^{2\aaa^{-1}}}$$
and hence for odd $j$
\begin{equation}\label{*h23*b}
g(t_{j})>g(a)+\frac{(b-a)^{2\aaa^{-1}}}{N^{2\aaa^{-1}}}-6\frac{(b-a)^{2\aaa^{-1}}}{N^{4\aaa^{-1}}}
\end{equation}
and for even $j$
\begin{equation}\label{*h23*c}
g(t_{j})<g(a)-\frac{(b-a)^{2\aaa^{-1}}}{N^{2\aaa^{-1}}}+6\frac{(b-a)^{2\aaa^{-1}}}{N^{4\aaa^{-1}}}.
\end{equation}
 
 \begin{claim}\label{*claimh31*}
Suppose that  there exists an odd $j\in \{ 1,...,N \}$
 such that 
 \begin{equation}\label{*h24*a}
 \text{one can find }x_{j}<y_{j},\ x_{j},y_{j}\in A\cap I(a,b,j).
 \end{equation}
 Then  for $j'\geq j+2$ we have $A\cap I(a,b,j')=\ess$
 and $A\cap I(a,b,j+1)$ can contain at most one element.
 \end{claim}

 \begin{proof}[Proof of Claim \ref{*claimh31*}]
 The second part of the statement of the claim is easier and we verify it first.
 Suppose that we can find $x_{j+1}<y_{j+1}$, $x_{j+1},y_{j+1}\in A\cap I(a,b,j+1).$
 Then by the Mean Value theorem   there exist $t_{j}\in (x_{j},y_{j})\sse I(a,b,j)$
 and $t_{j+1}\in(x_{j+1},y_{j+1})\sse I(a,b,j+1)$
  such that 
  \begin{equation}\label{*h24*b}
  \frac{f(y_{j})-f(x_{j})}{y_{j}-x_{j}}=f'(t_{j})=g(t_{j})\text{ and }
  \frac{f(y_{j+1})-f(x_{j+1})}{y_{j+1}-x_{j+1}}=
  f'(t_{j+1})=g(t_{j+1}).
  \end{equation}
  By \eqref{*h23*b} and \eqref{*h23*c} we obtain $g(t_{j})>g(t_{j+1})$
  but this contradicts the fact that $f$ is convex on $A$. 
  
  Suppose now that $j'\geq j+2$
  and
  \begin{equation}\label{*h25*a}
  \text{   there exists }x_{j}'\in A\cap I(a,b,j').
  \end{equation}
  Then
  \begin{equation}\label{*h25*c}
  x_{j}'-x_{j}>x_{j}'-y_{j}>\frac{(b-a)}{N}.
  \end{equation}
  By \eqref{*h23*b} and \eqref{*h24*b}
  \begin{equation}\label{*h25*b}
  \frac{f(y_{j})-f(x_{j})}{y_{j}-x_{j}}>
  g(a)+\frac{(b-a)^{2\aaa^{-1}}}{N^{2\aaa^{-1}}}-6\frac{(b-a)^{2\aaa^{-1}}}{N^{4\aaa^{-1}}}.
  \end{equation}
We also have by \eqref{*h22*b}
 \begin{equation}\label{*h26*a}
 |{\widetilde{f}}(x_{j})-{\widetilde{f}}(x_{j}')-(f(x_{j})-f(x_{j}'))|<28\frac{(b-a)^{2\aaa^{-1}}(b-a)}{N^{4\aaa^{-1}}}
 \end{equation}
 and by \eqref{*h25*c}
 \begin{equation}\label{*h26*bbb}
 \Big |\frac{{\widetilde{f}}(x_{j})-{\widetilde{f}}(x_{j}')}{x_{j}-x_{j}'}
 -\frac{f(x_{j})-f(x_{j}')}{x_{j}-x_{j}'}
 \Big |< 28 \frac{(b-a)^{2\aaa^{-1}}}{ N^{3\aaa^{-1}}}.
 \end{equation}
 The definition of $\fff_{1}$, \eqref{*h6*d}  and \eqref{*h6*b} imply that if $x_{j}'-x_{j}\leq 2(b-a) /N$
 then
 \begin{equation}\label{*h26*b}
 {\widetilde{f}}(x_{j}')-{\widetilde{f}}(x_{j})=\int_{x_{j}}^{x_{j}'}\fff_{1}(t)dt\leq \fff_{0}(a)(x_{j}'-x_{j})=
 g(a)(x_{j}'-x_{j})
 \end{equation}
 and if $x_{j}'-x_{j} >2(b-a)/N$ then
 \begin{equation}\label{*h26*bb}
 {\widetilde{f}}(x_{j}')-{\widetilde{f}}(x_{j})=\int_{x_{j}}^{x_{j}'}\fff_{1}(t)dt<
 g(a)(x_{j}'-x_{j})+\frac{(b-a)^{2\aaa^{-1}}}{N^{2\aaa^{-1}}}\cdot \frac{(b-a)}{N}.
 \end{equation}
 In both cases we obtain that
 \begin{equation}\label{*h27*b}
 \frac{{\widetilde{f}}(x_{j}')-{\widetilde{f}}(x_{j})}{x_{j}'-x_{j}}\leq g(a)+\frac{(b-a)^{2\aaa^{-1}}}{2N^{2\aaa^{-1}}}.
 \end{equation}
 Therefore, if $N>100$, \eqref{*h25*b}, \eqref{*h26*bbb} and \eqref{*h27*b} imply
 $$\frac{f(x_{j}')-f(x_{j})}{x_{j}'-x_{j}}<\frac{f(y_{j})-f(x_{j})}{y_{j}-x_{j}}$$
 and this contradicts the convexity of $f$ on $A$ and 
 proves Claim \ref{*claimh31*}.
 \end{proof}

 A similar argument can show 
 \begin{claim}\label{*claimh31b*}
Suppose that  there exists an even $j\in \{ 1,...,N \}$
 such that 
 \begin{equation}\label{*h27*c}
 \text{one can find }x_{j}<y_{j},\ x_{j},y_{j}\in A\cap I(a,b,j).
 \end{equation}
Then  for $j'\leq j-2$ we have $A\cap I(a,b,j')=\ess$
 and $A\cap I(a,b,j-1)$ can contain at most one element.
 \end{claim}

 Thus, there are at most two $j$s for which $A\cap \inte(I(a,b,j))$
 can contain more than one element.

 Suppose ${\widehat{j}}$ is  such that  $A\cap \inte(I(a,b,{\widehat{j}}))$
 contains more than one element.
 Then $\inte(I(a,b,{\widehat{j}}))={\widehat{I}}$ is a component of
 $G_{1}$ and hence it is an interval belonging to $\cai_1$.
 
 Denoting the endpoints of ${\widehat{I}}$ by ${\widehat{a}}$ and ${\widehat{b}}$ we have
 $(\widehat{a},\widehat{b})\in \cai_{1}.$ If ${\widehat{b}}-{\widehat{a}}\leq N^{-k}$
 then ${\can}_{N,k}((A\sm {G}_{T,1,k}'')\cap(\widehat{a},\widehat{b}))\leq 2.$
 
 If ${\widehat{b}}-{\widehat{a}}> N^{-k}$ then we can argue as before using $\fff_{2}$
 instead of $\fff_{1}$ to show that the part of
 $A\sm {G}_{T,1,k}''$ in transitional intervals $(a',b')\in\cai_{2}'$,
 $(a',b')\sse (\widehat{a},\widehat{b})$ can be covered by no more than $3N$
 many intervals of the form $[(j-1)\cdot N^{-k},j\cdot N^{-k}]$ and hence
 \begin{equation}\label{*h29*a}
 {\can}_{N,k}((\widehat{a},\widehat{b})\cap G_{2}'\cap (A\sm{G}_{T,1,k}''))< 3N
 \end{equation}
 and there are at most two $j$s for which $A\cap\inte(I({\widehat{a}},{\widehat{b}},j))$ can
 contain more than one element. In these intervals we need to repeat our argument, but the number of these intervals 
 can at most double at each step.
 Therefore, if we consider $(a,b)\in {\cal I}_{0}$ satisfying \eqref{*h15*c}
 then in at most $k-k'$ many steps we can obtain intervals shorter than $N^{-k}$.
 Thus with a generous upper estimate
 \begin{equation}\label{*h36*aa}
 {\can}_{N,k}((a,b)\cap (A\sm{G}_{T,1,k}''))<2^{k-k'}100\cdot N.
 \end{equation}
 By  \eqref{*h16*a} we obtain that
 $${\can}_{N,k}({G}_{0}\cap (A\sm {G}_{T,1,k}''))<\sum_{k'=1}^{k-1}N\cdot C_{F_{0}}N^{\aaa  k'}2^{k-k'}\cdot 100N= $$ 
 $$100N^{2}\cdot C_{F_{0}}N^{\aaa k}\sum_{k'=1}^{k-1}\Big (\frac{2}{N^{\aaa}}\Big )^{k-k'}< C_{F_{0}}' N^{\aaa k},$$
 where we used that we can assume that we use an $N$ so large that $2/N^{\aaa}<1$. This and \eqref{*h15*b} imply \eqref{*h15*a}.
 This concludes the proof of Claim \ref{*nnk*}.
\end{proof}
In Claim \ref{*nnk*} in \eqref{*h15*a} we
 estimated ${\can}_{N,k}(A\sm {G}_{T,1,k}'')$.
 Next we need to estimate ${\can}_{N,k}(A\cap {G}_{T,1,k}'').$
 We use
 \begin{equation}\label{*h37*a}
 {\can}_{N,k}(A\cap{G}_{T,1,k}'')\leq
 {\can}_{N,k}(A\cap{G}_{T,2,k}'')+
 {\can}_{N,k}((A\cap{G}_{T,1,k}'')\sm {G}_{T,2,k}'' )\leq
 \end{equation}
 $$\sum_{j=2}^{\oo} {\can}_{N,k}((A\cap{G}_{T,j-1,k}'')\sm {G}_{T,j,k}'' ),$$
where the last sum contains only finitely many nonzero terms since the lengths of
the longest components of $G'_{T,j,k}$ (and of $G''_{T,j,k}\sse G'_{T,j,k}$)
tend to zero as $j\to\oo$.
\begin{claim}\label{*nnkj*}
For any $j\geq 2$ we have
\begin{equation}\label{*h38*a}
{\can}_{N,k}((A\cap{G}_{T,j-1,k}'')\sm {G}_{T,j,k}'' )\leq
N^{2}\cas(G_{T,j-1}',\aaa)\cdot  C_{g_{1}} N^{\aaa k}.
\end{equation}
\end{claim}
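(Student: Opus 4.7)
The plan is to reduce Claim \ref{*nnkj*} to the already-established Claim \ref{*nnk*} by exploiting the (restricted) self-similarity \eqref{*h12*cc} of $g$ on each transitional interval. Fix $(a,b)\in\cai_{T,j-1,k}''\sse\cai_{T,j-1}'$ and set $\widetilde A=\FFF_{(a,b)}(A\cap(a,b))$. Integrating the identity $g(x)=g(a)+(g(b)-g(a))\cdot g(\FFF_{(a,b)}(x))$ on $(a,b)$ gives
\begin{equation*}
f(x)=f(a)+g(a)(x-a)+(g(b)-g(a))(b-a)\cdot f(\FFF_{(a,b)}(x)),
\end{equation*}
so on $(a,b)$ the function $f$ is an affine function plus a nonzero scalar multiple, of sign $\mathrm{sgn}(g(b)-g(a))$, of $f\circ\FFF_{(a,b)}$. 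Consequently, convexity of $f|_{A\cap(a,b)}$ transfers to either convexity or concavity of $f|_{\widetilde A}$, so Claim \ref{*nnk*} (whose proof also covers the concave case) applies to $\widetilde A$.

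Next I select the scale at which to invoke Claim \ref{*nnk*}. Let $k'=\lfloor k+\log_N(b-a)\rfloor$; this is a nonnegative integer since $b-a>N^{-k}$, and it satisfies $N^{-k}\le(b-a)N^{-k'}\le N\cdot N^{-k}$. By \eqref{*gvtn*} the level-$j$ transitional intervals inside $(a,b)$ are exactly the preimages under $\FFF_{(a,b)}$ of the components of $G_{T,1}'$, with a component of length $L$ in $[0,1]$ corresponding to a subinterval of $(a,b)$ of length $L(b-a)$. Hence each component of $G_{T,1,k'}''$ (length $L>N^{-k'}$) pulls back to a subinterval of length $>(b-a)N^{-k'}\ge N^{-k}$, which therefore lies in $G_{T,j,k}''$. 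It follows that
\begin{equation*}
\FFF_{(a,b)}\bigl((A\cap(a,b))\sm G_{T,j,k}''\bigr)\sse \widetilde A\sm G_{T,1,k'}''.
\end{equation*}

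Applying Claim \ref{*nnk*} to $\widetilde A$ at scale $k'$ yields $\can_{N,k'}(\widetilde A\sm G_{T,1,k'}'')<C_{g_1}N^{\aaa k'}$. Pulling back, each $N^{-k'}$-grid interval in $[0,1]$ becomes an interval of length $(b-a)N^{-k'}\le N\cdot N^{-k}$ inside $(a,b)$, and such an interval meets at most $N+1$ many $N^{-k}$-grid intervals; combining with $N^{\aaa k'}\le N^{\aaa k}(b-a)^{\aaa}$ gives
\begin{equation*}
\can_{N,k}\bigl((A\cap(a,b))\sm G_{T,j,k}''\bigr)\le(N+1)\,C_{g_1}\,N^{\aaa k}(b-a)^{\aaa}.
\end{equation*}
Summing this over $(a,b)\in\cai_{T,j-1,k}''\sse\cai_{T,j-1}'$ and using $N+1\le N^{2}$ (for $N\ge 2$) produces exactly the bound $N^{2}\,\cas(G_{T,j-1}',\aaa)\,C_{g_1}\,N^{\aaa k}$ asserted by \eqref{*h38*a}. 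The delicate point is the simultaneous constraint $N^{-k'}\approx N^{-k}/(b-a)$: $k'$ must be small enough that $G_{T,1,k'}''$ pulls back entirely into $G_{T,j,k}''$, yet large enough that the pullback inflates $N^{-k}$-grid counts only by an $O(N)$ factor. I expect that verifying these inclusions and the grid-counting estimate simultaneously will require the bookkeeping above to be executed carefully, but the underlying mechanism is just the scale-$1$ bound of Claim \ref{*nnk*} replicated inside each $(a,b)\in\cai_{T,j-1,k}''$.
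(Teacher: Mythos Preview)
Your proof is correct and follows essentially the same strategy as the paper: restrict to a component $(a,b)\in\cai_{T,j-1,k}''$, use the self-similarity \eqref{*h12*cc} to transfer to $[0,1]$, apply Claim~\ref{*nnk*} at an intermediate scale comparable to $N^{-k}/(b-a)$, pull back the grid count with an $O(N)$ loss, and sum over components. Your choice of scale $k'=\lfloor k+\log_N(b-a)\rfloor$ is equivalent (up to a unit shift) to the paper's, and you are more careful than the paper in noting that the sign of $g(b)-g(a)$ may flip convexity to concavity on $\widetilde A$, which is harmless since Claim~\ref{*nnk*} covers both cases.
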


We prove this claim later. Before doing so we finish the proof of
Theorem \ref{*5b*}.
By \eqref{*h13*a}, \eqref{*h15*a}, \eqref{*h37*a} and \eqref{*h38*a}
$${\can}_{N,k}(A)\leq C_{g_{1}}N^{\aaa k}+ \sum_{j=2}^{\oo}
 N^{2} C_{g_{1}} \frac{1}{N^{j-1}}\cdot N^{\aaa k}< C_{\aaa}
N^{\aaa k}$$
with a suitable constant $C_{\aaa}$ not depending on $k$.
This implies that $\udimm A\leq \aaa$ and ends the proof of Theorem \ref{*5b*}.
\end{proof}
 
\begin{proof}[Proof of Claim \ref{*nnkj*}]
Suppose $(a,b)\in \cai_{T,j-1,k}''$, that is,
$(a,b)$ is a component of $G_{T,j-1,k}''$.
Recall \eqref{*h12*cc},  the (restricted) self-similarity property of
$g$.
If $g$ is convex on $A\cap (a,b)$ then $g$ is also convex on
$\FFF_{(a,b)}(A\cap (a,b))$ and by \eqref{*gvtn*},
$\FFF_{(a,b)}(G'_{T,j}\cap (a,b))=G'_{T,1}$.

Suppose
\begin{equation}\label{*h39*c}
N^{-k'-1}<b-a\leq N^{-k'}\text{ for a }k'<k.
\end{equation}

If $(a',b')\in \cai_{T,j,k}''$ then $b'-a'> N^{-k}$
and $\FFF_{(a,b)}(a',b')$ is an interval
of length $\frac{b'-a'}{b-a}>\frac{N^{-k}}{b-a}$.
By \eqref{*h39*c}, $\FFF_{(a,b)}(G''_{T,j,k}\cap (a,b))$
contains all intervals of $G'_{T,1}$ which are of length at least 
$N^{-k}/(b-a)< N^{-k+k'+1}$.
Hence
$\FFF_{(a,b)}(G''_{T,j,k}\cap (a,b))\supset G''_{T,1,k-k'-1}$
and 
$$\FFF_{(a,b)}(A\cap (a,b))\sm G''_{T,1,k-k'-1}\supset \FFF_{(a,b)}((A\cap (a,b))\sm G''_{T,j,k}).$$
Thus, 
using \eqref{*h39*c} and the fact that an interval of length 
$(b-a) N^{-k+k'+1}$ can be covered by
$(N+1)$ many grid intervals of length $N^{-k}$
\begin{equation}\label{*h39*a}
N^{2} \can_{N,k-k'-1}(\FFF_{(a,b)}(A\cap (a,b))\sm G_{T,1,k-k'-1}'')\geq
{\can}_{N,k}(((a,b)\cap A)\sm G_{T,j,k}'').
\end{equation}
On the other hand, by \eqref{*h15*a} and \eqref{*h39*c}
\begin{equation}\label{*h39*b}
\can_{N,k-k'-1}(\FFF_{(a,b)}(A\cap (a,b))\sm G_{T,1,k-k'-1}'')<
C_{g_{1}}N^{\aaa(k-k'-1)}< C_{g_{1}}(b-a)^{\aaa}N^{\aaa k}.
\end{equation}
By \eqref{*h39*a}
$$
\can_{N,k}((A\cap (a,b))\sm G_{T,j,k}'')<
 N^{2} C_{g_{1}}(b-a)^{\aaa}N^{\aaa k}.
$$
Adding this for all $(a,b)\in \cai''_{T,j-1,k}$
we obtain \eqref{*h38*a}.
\end{proof} 


\end{document}